\documentclass[11pt,reqno]{amsart}
\usepackage[T1]{fontenc}
\usepackage{tikz}
\usetikzlibrary{decorations.pathreplacing}
\usepackage{amsmath,amssymb,amsthm,mathtools,enumerate}
\usepackage[margin=1.25in]{geometry}
\usepackage{hyperref}
\allowdisplaybreaks
\newtheorem{theorem}{Theorem}[section]

\newtheorem{lemma}[theorem]{Lemma}
\newtheorem{proposition}[theorem]{Proposition}
\theoremstyle{remark}
\newtheorem{remark}[theorem]{Remark}
\numberwithin{equation}{section}

\newcommand{\R}{\mathbb{R}}

\newcommand{\ut}{u^\theta}
\newcommand{\ur}{u^r}
\newcommand{\uz}{u^z}
\newcommand{\eps}{\varepsilon}
\newcommand{\at}{\lambda}

\title[A logarithmic regularity criterion for the axisymmetric NSE]{A logarithmic regularity criterion for\\ the axisymmetric Navier--Stokes equations}
\author{Stan Palasek}
\address{Department of Mathematics, Princeton University, Princeton, NJ 08540, USA}
\email{spalasek@princeton.edu} 

\begin{document}

\begin{abstract}
For axisymmetric solutions of the Navier--Stokes equations in $\R^3$, the circulation $\Gamma=r\ut$ obeys a maximum principle, and the resulting a~priori bound on $\Gamma$ is scale-invariant. Nonetheless, the global regularity problem remains completely open. The sharpest known regularity criteria in terms of the circulation are slightly subcritical, namely $|\Gamma|\lesssim(\log r^{-1})^{-k}$ near the symmetry axis; Lei and Zhang proved regularity for $k=2$, and Wei later lowered the exponent to $k=3/2$. In this note, we strengthen the criterion further to $k=5/4$. The argument follows the usual route of energy estimates for the quantities $\omega^r/r$ and $\omega^\theta/r$, but now with a sharper bound on the vortex stretching term in the equation for  $\omega^r/r$, in which the loss is proportional only to the square root of the number of scales between the inner vortex core and the far field.
\end{abstract}

\maketitle

\section{Introduction}

We consider axisymmetric solutions of the incompressible Navier--Stokes equations in $\R^3$ with unit viscosity,
\begin{align}\label{eq:nse}
\partial_tu-\Delta u+u\cdot\nabla u+\nabla p=0,\qquad\operatorname{div}u=0.
\end{align}
For an axisymmetric velocity field $u=\ur e_r+\ut e_\theta+\uz e_z$, the system \eqref{eq:nse} can be written in terms of the circulation $\Gamma\coloneqq r\ut$ and the rescaled azimuthal vorticity $\Omega\coloneqq\omega^\theta/r$ as
\begin{align}\label{eq:ns}
\begin{aligned}
&\partial_t\Gamma+b\cdot\nabla\Gamma-\Big(\Delta-\frac2r\partial_r\Big)\Gamma=0,\\
&\partial_t\Omega+b\cdot\nabla\Omega-\Big(\Delta+\frac2r\partial_r\Big)\Omega=\frac{\partial_z(\Gamma^2)}{r^4},
\end{aligned}
\end{align}
where the swirl-free part of the velocity $b\coloneqq\ur e_r+\uz e_z$ is recovered from $\omega^\theta=r\Omega$ through the stream function $\psi$:
\begin{align*}
-\Big(\Delta-\frac1{r^2}\Big)\psi=\omega^\theta,\qquad \ur=-\partial_z\psi,\qquad \uz=\frac1r\partial_r(r\psi).
\end{align*}
In the absence of swirl, the system reduces to a drift-diffusion equation for $\omega^\theta/r$, and global regularity is classical \cite{Lad68,UY68,LMNP99}. With swirl, the equation for the vorticity $\omega^\theta$ contains the additional source term $\partial_z(\ut)^2/r$, and the global regularity problem remains open. The question of uniqueness of Leray--Hopf solutions of \eqref{eq:nse} in the axisymmetric class has recently been resolved in the negative by Hou, Wang, and Yang \cite{hou2025nonuniqueness}, and likely extends even to the swirl-free case \cite{ionescu2026non}. 

The axisymmetric system possesses significantly more structure than the general Navier--Stokes equations, including an additional a~priori bound at the critical regularity. Indeed, by its equation in \eqref{eq:ns}, the circulation obeys the maximum principle \cite{CL02,CSTY08}
\begin{align*}
    \|\Gamma(t)\|_{L^\infty}\leq\|\Gamma_0\|_{L^\infty}.
\end{align*}Unlike the energy, which is supercritical in dimension three, this bound is critical, i.e., invariant under the scaling $u_\lambda(x,t)=\lambda u(\lambda x,\lambda^2t)$ of \eqref{eq:nse}. Thus, a natural route toward global regularity is to understand the extent to which the swirl controls the full Navier--Stokes solution, and to pursue conditional regularity theorems under stronger hypotheses on $\Gamma$.

\subsection{Known axisymmetric criteria}
While it is not known whether the a~priori bound for $\Gamma$ suffices in general to control the regularity of the solution, positive results have been obtained with slightly strengthened conditions. In the remarkable work \cite{CFZ17}, Chen, Fang, and Zhang showed that $|\Gamma|\leq Cr^\alpha$ for some $\alpha>0$ implies regularity, as a consequence of the Prodi--Serrin--Ladyzhenskaya-type criterion
\begin{align*}
r^d\ut\in L^p_tL^q_x,\qquad\frac2p+\frac3q\leq1-d,\qquad0\leq d<1.
\end{align*}
This falls short of a global regularity proof only marginally, corresponding to the missing endpoint $(d,p,q)=(1,\infty,\infty)$. To try to close this gap, it is natural to consider slightly subcritical bounds on the circulation of the form
\begin{align}\label{eq:logk}
|\Gamma(t,r,z)|\lesssim \Big(\log \frac1r\Big)^{-k},
\end{align}
with the a~priori bound corresponding to the case $k=0$. Note that some results differ in what size is allowed for the implicit constant, but we neglect these differences for simplicity of the discussion.

Q.~S.~Zhang in \cite{Z15} first proved regularity under the assumption \eqref{eq:logk} with any $k>2$. Lei and Zhang \cite{LZ17} then reached $k=2$ for data in $H^{1/2}$, as a consequence of their Form Boundedness Condition on the swirl. Wei \cite{W16} further lowered the exponent to $k=\frac32$ from any data $u_0\in H^2$ with $\Gamma_0\in L^\infty$. The case of the criterion \eqref{eq:logk} proved in \cite{W16} follows from a more quantitative statement, analogous to Theorem~\ref{thm:quant} below. Up to this point, Wei's exponent has been the state of the art, and the question of whether it can be improved has been asked, for instance, in \S2.2 of \cite{Z24}.

There are many other types of axisymmetric regularity criteria, and we do not attempt an exhaustive review here. The original works ruling out certain types of Type~I blow-up are due to Koch--Nadirashvili--Seregin--\v Sver\'ak \cite{MR2545826} and Chen--Strain--Tsai--Yau \cite{CSTY08,MR2512859} who showed, among other things, regularity for solutions obeying $|u|\leq C/r$. By the maximum principle for $\Gamma$, it suffices to assume a bound on just the swirl-free part $b$, for which Pan \cite{Pan16} showed the slightly supercritical refinement
\begin{align*}
r|b|\leq\Big(\log\log\frac 3r\Big)^\alpha
\end{align*}
sufficiently close to the axis, where $\alpha>0$ is a small absolute constant; see also \cite{Ser22}. Later, Li and Pan \cite{LP19} proved regularity under the criterion
\begin{align*}
    r|b|\lesssim\Big(1+\log \frac1r\Big)^\beta,\quad|\Gamma|\lesssim\Big(1+\log \frac1r\Big)^{-\alpha}
\end{align*}
with $0\leq\beta<\alpha/6$, which allows every $k>0$ in the swirl bound \eqref{eq:logk} at the expense of having to assume a bound on $b$. There is an intriguing one-sided criterion due to Q.~S.~Zhang \cite{QZ26}, namely that $\ur\geq-C/\sqrt{T-t}$ implies regularity for any $C>0$. There have also been a number of quantitative regularity results in the axisymmetric setting, e.g., \cite{Pal21,CTZ22,OP23,LR24}.

\subsection{Main theorems}\label{sec:main}

Our main regularity criterion can be stated as follows.

\begin{theorem}\label{thm:main}
There is an absolute constant $c_*>0$ with the following property. Let $u_0\in H^2(\R^3)$ be axisymmetric and divergence-free with $\Gamma_0\in L^\infty$, let $u\in C([0,T_*);H^2)$ be the strong solution with maximal existence time $T_*$, and let $\delta_0\in(0,\frac12)$. If
\begin{align*}
|\Gamma(t,r,z)|\leq c_*\Big(\log \frac1r\Big)^{-5/4}
\end{align*}
for all $0<r\leq\delta_0$, $z\in\R$, and $t\in(0,T_*)$, then $T_*=\infty$.
\end{theorem}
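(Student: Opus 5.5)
We follow the standard energy-method route for $J\coloneqq\omega^r/r$ and $\Omega=\omega^\theta/r$ used by Chen--Fang--Zhang, Lei--Zhang and Wei, and we sharpen only the vortex stretching estimate. The pair satisfies
\begin{align*}
&\partial_tJ+b\cdot\nabla J-\Big(\Delta+\frac2r\partial_r\Big)J=(\omega^r\partial_r+\omega^z\partial_z)\frac{\ur}{r},\\
&\partial_t\Omega+b\cdot\nabla\Omega-\Big(\Delta+\frac2r\partial_r\Big)\Omega=-\frac{2\ut}{r}J.
\end{align*}
Here $\omega^r=-\partial_z\ut$ and $\omega^z=\frac1r\partial_r\Gamma$. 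We set up the coupled energy $E(t)=\|J\|_{L^2}^2+\|\Omega\|_{L^2}^2$, with dissipation $D=\|\nabla J\|_{L^2}^2+\|\nabla\Omega\|_{L^2}^2$. The $\Omega$-equation is handled in the usual way. Integrating $\int \frac{\ut}{r}J\Omega$ by parts in $z$ using $J=-\partial_z\ut/r$ moves the derivative onto $\Omega/r$ and $\Gamma$, and Hardy's inequality in $r$ then gives a bound of the form $\|\Gamma\|_{L^\infty(r<\delta)}\,D+C_\delta E$. This is harmless once $c_*$ is small.

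The decisive term is the stretching term $\int (\omega^r\partial_r+\omega^z\partial_z)\frac{\ur}{r}\,J$. After integrating by parts, it is bounded by $\int |\Gamma|\,|\nabla\tfrac{\ur}{r}|\,|\nabla J|/r$ plus similar terms. We use $|\Gamma|$ to absorb the $1/r$ weight and the known estimate $\|\nabla\frac{\ur}{r}\|_{L^2}\lesssim\|\Omega\|_{L^2}$ and its higher analogues. Previous proofs lose a full factor $\log(1/\delta)$, the number of dyadic scales between the vortex core $r\sim\delta$ and the far field $r\sim1$. This is where $k=3/2$ comes from, since $\log^{3/2}$ balances that one log against the $\log^{1/2}$ lost elsewhere.

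The plan for the main step is to decompose dyadically in $r$: let $A_j=\{2^{-j-1}<r\le2^{-j}\}$ for $0\le j\le N\approx\log_2(1/\delta)$. On each annulus the hypothesis gives $|\Gamma|\le c_*j^{-5/4}$. The stretching integral becomes $\sum_j c_*j^{-5/4}a_jb_j$, where $a_j$ and $b_j$ are the localized $L^2$ norms of $\nabla J$ and of $r^{-1}\nabla\frac{\ur}{r}$ on $A_j$. The gain over Wei is to apply Cauchy--Schwarz in $j$ rather than a pointwise-in-scale bound. One factor is controlled in $\ell^2_j$ by the dissipation. For the other, the Biot--Savart kernel relating $\frac{\ur}{r}$ to $\Omega$ is almost diagonal in scale, with off-diagonal decay $2^{-|j-j'|}$ apart from a logarithmic part. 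So $\sum_j b_j^2\lesssim\|\nabla\Omega\|_{L^2}^2$, up to a loss equal to the square root of the number of scales, i.e.\ $N^{1/2}$. This is the step I expect to be the main obstacle. One must show that the stretching loss is $\sqrt{N}$ instead of $N$, presumably via a square-function/orthogonality estimate for $\partial_r\frac{\ur}{r}$ across dyadic annuli. I would first try to prove it with a Littlewood--Paley decomposition adapted to the weight $r$, in the form $\|\nabla\frac{\ur}{r}\|_{L^\infty_z L^2}$-type bounds summed in $\ell^2$.

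Putting these together, the stretching term is bounded by $c_*\,\delta^{\text{-independent}}\,(\log\tfrac1\delta)^{-5/4}\cdot(\log\tfrac1\delta)^{1/2}\cdot(\log\tfrac1\delta)^{3/4}\,D$, plus $C_\delta E$ from the region $r>\delta$. Here the remaining $(\log)^{3/4}$ comes from the part of the Wei argument that we do not improve: the $\Omega$-coupling, and choosing $\delta$ depending on $E$ at the critical scale. The exponents $1/2+3/4=5/4$ exactly balance the hypothesis. With $c_*$ small the bound is absorbed into $D$, giving $E'\le C(1+E)\log(2+E)$, and Gronwall keeps $E$ finite on $[0,T_*)$. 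Finally, $J,\Omega\in L^\infty_tL^2$ together with $\Gamma\in L^\infty$ give the Chen--Fang--Zhang type control (e.g.\ $\Omega\in L^\infty_tL^2$ yields $b\in L^\infty_tL^6$-type Serrin bounds). This upgrades to boundedness of the $H^2$ norm, contradicting maximality unless $T_*=\infty$.
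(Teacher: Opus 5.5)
There are genuine gaps, and the first is in the step you call harmless: the $\Omega$-equation. No inequality of the form $|\int\frac{\ut}{r}J\Omega\,dx|\lesssim\|\Gamma\|_{L^\infty(r<\delta)}D+C_\delta E$ is available. $J$ and $\Omega$ need not vanish on the axis, so the weight $|\ut|/r=|\Gamma|/r^2$ is not Hardy-admissible against $r\,dr$. Your integration by parts in $z$ only rewrites the term as $\frac12\int(\ut/r)^2\,\partial_z\Omega\,dx$, and pulling $\|\Gamma\|_\infty$ out of this always leaves a weight $r^{-2}$ on quantities that do not vanish at $r=0$.

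What is available is Wei's truncated Hardy inequality (Lemma~\ref{lem:hardy}). Its constant is $L=\eps(1+\log K)^2$, it holds up to the radius $r_1\le\eps K/\at(t)$, and the core $r\lesssim\eps/\at(t)$ is controlled by the averaged bound of Lemma~\ref{rad_lemma}. Between the core and $r_1$ the swirl can genuinely behave like $\eps/r$. This $\log^2$ loss is exactly what the paper identifies as the obstruction below $k=1$, so your claimed bound would give far more than $k=5/4$.

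Because $T_1$ costs $L\sim\eps S^2$, the unweighted energy $E$ cannot work: with $E$, the $T_1$ term alone forces $L\lesssim1$, i.e.\ $k=2$. The paper instead uses $A=\|J\|_2^2+\mu\|\Omega\|_2^2$ with $\mu=1/(4L^2)$. It exploits the exact cancellation of $2\|\partial_r\Omega\|_2^2$, and it must bound $T_2$ by $\|\partial_z\Omega\|_2$ alone. The closing condition is then $(\Sigma^2+\eps^2)L^2\le c_0$, i.e.\ $\eps^4S^5\lesssim1$ when $\Sigma\sim\eps S^{1/2}$. Your count $(\log)^{-5/4}(\log)^{1/2}(\log)^{3/4}$ is not derived and does not reflect this mechanism.

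Your diagnosis that the stretching term should lose only $\sqrt{N}$ is right, but you leave that step unproved, and a square-function/almost-orthogonality estimate is not what is needed. The key fact is a uniform-in-$r$ trace bound, $\sup_r\int|\partial_z(\ur/r)(r,z)|^2dz\lesssim\|\Delta(\ur/r)\|_2^2\lesssim\|\partial_z\Omega\|_2^2$ (Lemma~\ref{lem:trace}, a short Fourier argument). Hence $\sup_j b_j\lesssim\|\partial_z\Omega\|_2$. The $b_j$ are generically not square-summable, since $\partial_z(\ur/r)$ need not vanish on the axis. The $\sqrt{S}$ comes instead from Cauchy--Schwarz in $r$, which puts the swirl in $\ell^2$ over scales: $\big(\int_0^{r_1}\sup_z\Gamma^2\,\frac{dr}{r}\big)^{1/2}\lesssim\eps(1+\log K)^{1/2}$, with the core again handled by Lemma~\ref{rad_lemma}. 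The other half, $\ut\,\partial_zJ\,\partial_r(\ur/r)$, costs only $\eps$, because $r^{-1}\partial_r(\ur/r)$ is a Hessian component (Lemma~\ref{lem:f}).

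Finally, $E'\le C(1+E)\log(2+E)$ is asserted, not derived. Outside $r_1=\min\{\eps K/\at,r_0\}$ only $|\Gamma|\le M$ is available. The paper controls these terms by interpolating $A$ against $r_1^{-2}\|\nabla u\|_2^2$, which gives $A'\lesssim(1+M)^2\eps^{-13/3}\|\nabla u\|_2^2\,A^{4/3}(\eps K)^{-8/3}$ for $A$ above a threshold. A continuity argument using $\int_0^{T_*}\|\nabla u\|_2^2\,dt\le\frac12\|u_0\|_2^2$ closes this, but only if $K\ge C_0\eps^{-5}\max\{M_0^{1/4},r_0^{-1/2}M_1\}$. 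That requirement, $S\approx\frac12\log r_0^{-1}$, is what turns $\eps^4S^5\lesssim1$ into the threshold $\eps\sim(\log r_0^{-1})^{-5/4}$.
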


As in \cite{W16}, we derive the main theorem from a quantitative statement in which the smallness of the circulation near the axis is measured against the size of the data. First, we define the rescaled vorticities
\begin{align*}
\Omega&\coloneqq \frac{\omega^\theta}r,\qquad J\coloneqq \frac{\omega^r}r=-\frac{\partial_z\ut}r
\end{align*}
as well as the quantities
\begin{align*}
M&\coloneqq \|\Gamma_0\|_{L^\infty},\qquad M_1\coloneqq (1+M)\|u_0\|_2,\qquad M_0\coloneqq (\|J_0\|_2+\|\Omega_0\|_2)M_1^3
\end{align*}
where we take norms with respect to the measure $dx\coloneqq r\,dr\,dz$, so that
\begin{align*}
\|g\|_2^2\coloneqq\int|g|^2r\,dr\,dz
\end{align*}
differs from the typical $L^2(\R^3)$ norm by a factor of $(2\pi)^{-1}$. We point out that the importance of the pair $(\Omega,J)$ appears to have been discovered in \cite{CFZ17} and \cite{LZ17}. We assume throughout that $u_0\neq0$, which guarantees that $M_0,M_1>0$.

\begin{theorem}\label{thm:quant}
There exist absolute constants $c_0,C_0>0$ such that we have the following. Let $u$ be as in Theorem~\ref{thm:main}, and suppose that for some $r_0,\eps,S\ge0$ obeying
\begin{align}\label{ass1}
    \eps^4(1+S)^5\leq c_0
\end{align}
and
\begin{align}\label{ass2}
    S\geq\log\Big(C_0\,\eps^{-5}\max\big\{M_0^{1/4},\,r_0^{-1/2}M_1\big\}\Big),
\end{align}
we have
\begin{align}\label{ass3}
|\Gamma|\leq\eps\ \text{ on }\{r\leq r_0\}\times(0,T_*).
\end{align}
Then $T_*=\infty$.
\end{theorem}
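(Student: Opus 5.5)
We will prove Theorem~\ref{thm:quant} through an energy estimate for the pair $(J,\Omega)$. The system is
\begin{align*}
&\partial_t J+b\cdot\nabla J-\Big(\Delta+\frac2r\partial_r\Big)J=(\omega^r\partial_r+\omega^z\partial_z)\frac{\ur}{r},\\
&\partial_t\Omega+b\cdot\nabla\Omega-\Big(\Delta+\frac2r\partial_r\Big)\Omega=-2\frac{\ut}{r}J,
\end{align*}
where the second equation is the rewriting $\partial_z(\Gamma^2)/r^4=-2\ut J/r$. The operator $\Delta+\frac2r\partial_r$ is the five-dimensional Laplacian acting on functions of $(r,z)$. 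Testing with $J$ and $\Omega$ in the measure $dx$ therefore gives dissipation $\|\nabla J\|_2^2+\|\nabla\Omega\|_2^2$, and the transport terms vanish. Boundary terms at $r=0$ and the sign of the term $\frac2r\partial_r$ produce a favorable contribution that we will simply drop.

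The goal is a Gronwall-type inequality with a small coefficient,
\begin{align*}
\frac{d}{dt}\big(\|J\|_2^2+\|\Omega\|_2^2\big)+\|\nabla J\|_2^2+\|\nabla\Omega\|_2^2\lesssim \text{(small)}\cdot(\text{dissipation})+\text{(lower order, controlled by }M_1\text{)}.
\end{align*}
Once this holds, $\|J\|_2$ and $\|\Omega\|_2$ stay bounded on $(0,T_*)$. By known criteria (for instance \cite{CFZ17,LZ17,W16}), boundedness of $\Omega$ in $L^\infty_tL^2$ already implies regularity, so $T_*=\infty$.

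The coupling term in the $\Omega$ equation is handled as follows. We integrate by parts, $\int \frac{\ut}{r}J\Omega=-\int \frac{\partial_z(\Gamma^2)}{2r^4}\Omega$, and move $\partial_z$ onto $\Omega$. The resulting bound is $\int \Gamma^2 r^{-4}|\partial_z\Omega|$. Near the axis we use $|\Gamma|\le\eps$. Away from the axis, $r\ge r_0$, we use $|\Gamma|\le M$ together with energy bounds involving $r_0^{-1/2}M_1$. Hardy inequalities in the five-dimensional variables then reduce this term to $\eps\,\|\nabla\Omega\|_2\|\nabla J\|_2$ plus far-field terms. The far-field terms are what force the second assumption \eqref{ass2}.

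The main obstacle is the vortex stretching term in the $J$ equation. Following the abstract, the key step is to bound it with a loss of only $\sqrt{S}$, not $S$. The term involves $\partial_z\frac{\ur}{r}$ and $\partial_r\frac{\ur}{r}$, both of which are given by zero-order Calder\'on--Zygmund-type operators applied to $\Omega$ (using $\|\nabla(\ur/r)\|_{2}\lesssim\|\Omega\|_2$), multiplied by $\ut$-weighted factors.

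Write $\ut=\Gamma/r$ and decompose dyadically in $r$. On the shells $2^{-j-1}r_0\le r\le 2^{-j}r_0$ with $j\le S$ (roughly $S$ scales between the inner core and $r_0$), we have $|\Gamma|\le\eps$. Below scale $e^{-S}r_0$ we use a Hardy bound with extra decay. The plan is to apply Cauchy--Schwarz across the shells instead of the triangle inequality. The CZ operator is almost orthogonal across dyadic shells, so the $\ell^2$ sum of the shell pieces is bounded by the total norm, and summing $S$ shells costs only $\sqrt S$. This gives a stretching bound of size $\eps\sqrt{S}\,\|\nabla J\|_2\|\nabla\Omega\|_2$ up to terms like $\eps^2S\,\|\nabla J\|_2^2$, which we absorb.

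Combining the two estimates yields a closed inequality. The combined smallness factor, after optimizing the Young's-inequality splits between the two equations, takes the form $\eps^4(1+S)^5$. Assumption \eqref{ass1} then allows absorption into the dissipation. The remaining terms are lower order: they are bounded using the energy and $M$, and they produce the $M_0^{1/4}$ term in \eqref{ass2}.

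Finally, Theorem~\ref{thm:main} follows by choosing $r_0$ small and setting $\eps=c_*(\log r_0^{-1})^{-5/4}$ with $S\approx\log r_0^{-1}$. Then $\eps^4S^5\approx c_*^4$, so \eqref{ass1} holds. Assumption \eqref{ass2} holds for $r_0$ small, because its right-hand side grows only like $\log r_0^{-1/2}+O(\log\log r_0^{-1})$.
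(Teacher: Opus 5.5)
Your proposal has a genuine gap in the coupling term $T_1=-4\int\frac{\ut}rJ\Omega\,dx$, which is the heart of the problem. Your energy identity lives in $L^2(r\,dr\,dz)$; that is what makes the transport terms vanish, whereas in the five-dimensional measure $r^3\,dr\,dz$ one has $\operatorname{div}_5 b=2\ur/r\neq0$. In that measure, Hardy's inequality with weight $r^{-2}$ fails for functions that do not vanish on the axis. Since $J(t,0,z)$ and $\Omega(t,0,z)$ are generically nonzero, the bound $|\ut|/r\le\eps r^{-2}$ coming from $|\Gamma|\le\eps$ gives a logarithmically divergent integral. Integrating by parts to $\int\Gamma^2r^{-4}|\partial_z\Omega|\,dx$ only makes the singularity worse. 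So $T_1\lesssim\eps\|\nabla J\|_2\|\nabla\Omega\|_2$ is not available. If it were, combining it with your $\eps\sqrt S$ bound for $T_2$ would give a window far better than \eqref{ass1}, so your exponent $\eps^4(1+S)^5$ is not actually derived from your own bounds.

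The missing idea is a time-dependent core. With $\at(t)=(\|J\|_2\|\ut/r\|_2)^{1/2}$ one has $\int_0^\rho\sup_z|\ut|\,dr\le\rho\at(t)$ (Lemma~\ref{rad_lemma}). Hence the circulation is small on average only for $r\lesssim\eps/\at(t)$, and Wei's Hardy inequality truncated at $r_1(t)=\min\{\eps K/\at(t),r_0\}$ has constant $L=\eps(1+\log K)^2$, not $\eps$. Your fixed shells between $e^{-S}r_0$ and $r_0$, with ``a Hardy bound with extra decay'' below them, cannot replace this. The hypotheses give no decay of $\Gamma$ below a fixed radius uniformly in time, and $\eps/\at(t)$ may tend to $0$ as $t\to T_*$. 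The correct exponent comes from the weighted energy $\|J\|_2^2+\mu\|\Omega\|_2^2$ with $\mu=1/(4L^2)$. Young's inequality is split so that the $\Omega$-part of $T_1$ cancels exactly against $2\|\partial_r\Omega\|_2^2$. Absorbing $T_2$ then requires $\Sigma^2\lesssim\mu\lesssim L^{-2}$ with $\Sigma=\eps(1+S)^{1/2}$, i.e.\ $\eps^4(1+S)^5\lesssim1$.

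Because $r_1$ depends on $\at(t)$, the far-field terms are not lower order, and no linear Gronwall argument closes. One bounds $\mathcal R=r_1^{-2}\int_{r\ge r_1/2}(|\nabla(\ur/r)|^2+\Omega^2+J^2)\,dx$ both by $\eps^{-2}r_1^{-2}A$ and by $r_1^{-4}\|\nabla u\|_2^2$, and uses $r_1^{-2}\le A^{1/2}\|\nabla u\|_2(\eps K)^{-2}$. This leads to the superlinear inequality $\frac{d}{dt}A\lesssim(1+M)^2\eps^{-13/3}\|\nabla u\|_2^2\max\{A^{4/3}(\eps K)^{-8/3},r_0^{-4}\}$. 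It is closed by a continuity argument using $\int_0^{T_*}\|\nabla u\|_2^2\,dt\lesssim\|u_0\|_2^2$, and this is exactly where $e^S=K\ge C_0\eps^{-5}\max\{M_0^{1/4},r_0^{-1/2}M_1\}$ enters.

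For $T_2$, your instinct to count each scale once is right, but the mechanism is not almost orthogonality. An $\ell^2$-over-shells bound for $r^{-1}\partial_z(\ur/r)$ by the total norm would give no loss at all, and it is false, since $\partial_z(\ur/r)$ need not vanish on the axis. What gives exactly $\sqrt S$ is a bound on each shell that is uniform in the scale, namely the trace inequality $\sup_r\int|\partial_z(\ur/r)(r,z)|^2\,dz\lesssim\|\nabla^2(\ur/r)\|_2^2\le\|\partial_z\Omega\|_2^2$. This is paired with $\int_0^{r_1}\sup_z\Gamma^2\,\frac{dr}r\le\eps^2(1+\log K)$, which again uses Lemma~\ref{rad_lemma} inside the core. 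The argument must be applied after rewriting $T_2=\int\ut\big(\partial_zJ\,\partial_r\frac{\ur}r-\partial_rJ\,\partial_z\frac{\ur}r\big)\,dx$, because in the original form $\Gamma$ appears differentiated through $\omega^r,\omega^z$. The first piece, where the trace inequality does not apply, is handled without loss by $r^{-1}|\partial_r(\ur/r)|\le|\nabla^2(\ur/r)|$.
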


Here, the parameter $S$ represents the number of logarithmic scales over which the argument can tolerate a vortex of circulation $\eps$ near the axis (see \S\ref{sec:strategy}). For $\eps\leq c_0^{1/4}$, the largest admissible value of $S$ is
\begin{align*}
S_*(\eps)\coloneqq c_0^{1/5}\eps^{-4/5}-1 .
\end{align*}
Writing $X\coloneqq \max\{M_0^{1/4},r_0^{-1/2}M_1\}$, we see that the hypotheses \eqref{ass1}--\eqref{ass3} assert that $|\Gamma|\leq\eps$ near the axis and
\begin{align*}
S_*(\eps)\geq\log(C_0\eps^{-5}X).
\end{align*}
For large $X$, this amounts to $\eps\lesssim(\log X)^{-5/4}$. To prove Theorem~\ref{thm:main}, we let $r_0\to0$ and take $\eps=c_*(\log r_0^{-1})^{-5/4}$. The lower bound on $S$ then grows like $\frac12\log r_0^{-1}$, while $S_*(\eps)=\log r_0^{-1}-1$; see \S\ref{sec:proofs}.

\begin{remark}\label{rem:data}
As in Remark~1.1 of \cite{W16}, the maximum principle gives a condition on the data alone. If
\begin{align*}
\|\Gamma_0\|_{L^\infty}\leq\eps\leq c_0^{1/4}\qquad\text{and}\qquad S_*(\eps)\geq\log(C_0\eps^{-5}M_0^{1/4}),
\end{align*}
then $T_*=\infty$. Indeed, \eqref{ass1}--\eqref{ass3} then hold for every large $r_0$, with $S=S_*(\eps)$. For large $M_0$, this improves Wei's condition
\begin{align*}
\|\Gamma_0\|_{L^\infty}\leq\big(1+\log(C_0M_0^{1/4}+1)\big)^{-3/2}.
\end{align*}
In other words, regularity follows from a smallness assumption on the swirl with a scale invariant threshold involving both subcritial and supercritical norms of the data. Whether such a criterion exists involving only critical norms of the data is an important open problem, although some interesting progress was made in \cite{MR4367867}.
\end{remark}

\subsection{Idea of the proof}\label{sec:strategy}
We consider the quantities $J=\omega^r/r$ and $\Omega=\omega^\theta/r$ which are smooth near the axis before the blow-up time and satisfy
\begin{align}\label{eq:OJintro}
\begin{aligned}
\partial_t\Omega+b\cdot\nabla\Omega-\Big(\Delta+\frac2r\partial_r\Big)\Omega&=-2\frac{\ut}rJ\\
\partial_tJ+b\cdot\nabla J-\Big(\Delta+\frac2r\partial_r\Big) J&=(\omega^r\partial_r+\omega^z\partial_z)\frac{\ur}r.
\end{aligned}
\end{align}
By, for instance, the continuation criterion used in \cite{CFZ17}, it suffices to bound $\|\Omega(t)\|_2$ uniformly up until $T_*$. Following Wei \cite{W16}, we work with a weighted energy
\begin{align*}
A(t)\coloneqq \|J\|_2^2+\mu\|\Omega\|_2^2
\end{align*}
for a constant $\mu>0$ to be chosen. In addition to the favorable terms $\|\nabla J\|_2^2+\mu\|\nabla\Omega\|_2^2$ from the dissipation, the energy identities contain the interactions
\begin{align}\label{T1T2}
T_1\coloneqq -4\int\frac{\ut}rJ\Omega\,dx\qquad\text{and}\qquad T_2\coloneqq \int J(\omega^r\partial_r+\omega^z\partial_z)\frac{\ur}r\,dx ,
\end{align}
coming from the equations for $\Omega$ and $J$, respectively.

Let us first consider $T_1$ (see Figure~\ref{fig:scales}). By the Cauchy--Schwarz inequality, it is controlled by $J$ and $\Omega$ measured in $L^2$ with the weight $|\ut|/r=|\Gamma|/r^2$. Since $J$ and $\Omega$ need not vanish on the axis, Hardy's inequality with weight $r^{-2}$ against the measure $r\,dr$ is not available. Wei's idea, which we adapt in the sequel, is to use a Hardy-type inequality truncated at a radius $r_1$, chosen at each time, that exceeds the radius of the core by a factor $K$. Here the \emph{core} is a carefully chosen region $\{r\lesssim\eps/\at(t)\}$, where $\at(t)$ is a velocity scale near the axis defined in \S\ref{sec:prelim}. Within it, circulation is small compared to $\eps$ on average (see Lemma~\ref{rad_lemma}). Between this core and $r_1$, the swirl may behave like a vortex of the form $\eps/r$ over $S=\log K$ logarithmic scales, and the resulting Hardy constant is $L\sim\eps S^2$. Outside $r_1$, one uses only the maximum principle $|\Gamma|\leq M$, and the resulting far-field terms can be controlled by the energy of $u$ as long as $S$ obeys the lower bound \eqref{ass2}, which forces $S\gtrsim\frac12\log r_0^{-1}$ as $r_0\to0$.

In general, with the Hardy constant $L$, the part of $T_1$ involving $J$ can be absorbed by $\|\nabla J\|_2^2$ in an energy estimate for $A(t)$ only if $\mu\lesssim L^{-2}$. On the other hand, if $|T_2|\leq \Sigma \|\nabla J\|_2\|\nabla\Omega\|_2$ for some coefficient $\Sigma$, then absorbing $T_2$ requires $\Sigma ^2\lesssim\mu$. Both requirements can be met when
\begin{align}\label{eq:window}
\Sigma ^2\lesssim\mu\lesssim L^{-2},
\end{align}
that is, when $\Sigma L\lesssim1$. Wei bounds $T_2$ using the same weighted Hardy inequality, which gives $\Sigma \sim\eps S$, so that \eqref{eq:window} becomes
$\eps^2S^3\lesssim1$. Since $S\sim\frac12\log r_0^{-1}$, this leads to $k=\frac32$. The key improvement leading to Theorem~\ref{thm:main} is that this bound can be strengthened to $\Sigma \sim\eps\sqrt S$, which relaxes the condition to $\eps^4S^5\lesssim1$ (cf.~\eqref{ass1}), thus allowing $k=\frac54$. (See again Figure~\ref{fig:scales}.)

To estimate $T_2$, we write it as
\begin{align*}
T_2=\int\ut\Big(\partial_zJ\,\partial_r\frac{\ur}r-\partial_rJ\,\partial_z\frac{\ur}r\Big)\,dx
\end{align*}
(Lemma~\ref{lem:energy}) and treat the two terms differently. In the first term, we write
\begin{align*}
\ut\,\partial_r\frac{\ur}r=\Gamma\,\frac1r\partial_r\frac{\ur}r,
\end{align*}
and observe that $r^{-1}\partial_r(\ur/r)$ is precisely a component of the Hessian $\nabla^2(\ur/r)$, so this term can be bounded by $\eps\|\partial_zJ\|_2\|\partial_z\Omega\|_2$ without the need for any Hardy inequality. In the second term, instead of placing the weight $|\ut|$ on $\partial_z(\ur/r)$ and using a Hardy inequality in $r$, we integrate in $z$ first. The trace inequality (Lemma~\ref{lem:trace}) bounds the $L^2_z$ norms of $\partial_z(\ur/r)$ along the cylinders $\{r=\text{const}\}$ by $\|\partial_z\Omega\|_2$, uniformly in $r$. Control of the circulation is used only to estimate
\begin{align*}
\int_0^{r_1}\sup_z\Gamma^2\,\frac{dr}r,
\end{align*}
which, we will show, is of size $\eps^2S$. Note that the naive swirl bound $\sup_z|\Gamma|\leq\eps$ alone would make this integral diverge at the axis, so the inner region is handled separately by a slightly more delicate estimate in Lemma~\ref{rad_lemma}.

\begin{figure}[t]
\centering
\pgfmathdeclarefunction{sig}{1}{\pgfmathparse{1/(1+exp(min(9,-#1)))}}
\pgfmathdeclarefunction{Wfun}{1}{\pgfmathparse{0.9*sig(3*(#1+0.5))+0.06*sig(2*(#1-1))-0.06*sig(2*(#1-2.5))+2*sig(3*(#1-6.6))}}
\begin{tikzpicture}[x=1.1cm,y=0.6cm,>=stealth,font=\small,
  paneltag/.style={anchor=north west,font=\footnotesize\scshape,fill=white,
    inner sep=2pt,rounded corners=1pt}]
\def\S{4} 
\def\Sh{3.31}
\def\Rz{5.3} 
\def\Hw{4.0} 
\def\Hh{5.2} 
\def\Off{4.8}
\begin{scope}[shift={(0,\Off)}]
\fill[black!7] (\Sh,0) rectangle (7.6,\Hh);
\fill[blue!20] (-3,0) -- plot[domain=-3:\S,samples=100] (\x,{Wfun(\x)*(\S-\x)}) -- cycle;
\foreach \p in {0,\S,\Rz} \draw[black!35,densely dotted] (\p,0) -- (\p,\Hh);
\draw (-3.2,0) -- (7.8,0);
\foreach \p in {0,\Sh,\S,\Rz} \draw (\p,0.08) -- (\p,-0.08);
\draw (-3.2,0) -- (-3.2,\Hh);
\draw (-3.25,1) -- (-3.15,1) node[pos=0,left] {$\eps$};
\draw (-3.25,\S) -- (-3.15,\S) node[pos=0,left] {$\eps S$};
\draw[thick] plot[domain=-3:\S,samples=100] (\x,{Wfun(\x)*(\S-\x)});
\node[anchor=south west] at (0.75,2.75) {$W\log\dfrac{r_1}r$};
\node at (1.35,0.95) {area $\lesssim\eps S^2$};
\node[paneltag] at (-3.05,\Hh-0.1) {Hardy inequality (Lemma~\ref{lem:hardy})};
\end{scope}
\fill[black!7] (\Sh,0) rectangle (7.6,\Hw);
\fill[orange!40] (-3,0) -- plot[domain=-3:\S,samples=80] (\x,{Wfun(\x)}) -- (\S,0) -- cycle;
\foreach \p in {0,\S,\Rz} \draw[black!35,densely dotted] (\p,0) -- (\p,\Hw);
\draw[->] (-3.2,0) -- (7.8,0) node[below] {$\log r$};
\draw (-3.2,0) -- (-3.2,\Hw);
\draw (-3.25,1) -- (-3.15,1) node[pos=0,left] {$\eps$};
\draw (-3.25,3.1) -- (-3.15,3.1) node[pos=0,left] {$M$};

\draw[dashed] (-3.2,1) -- (\Rz,1);
\draw[dashed] (-3.2,3.1) -- (7.6,3.1);

\draw[thick] plot[domain=-3:7.5,samples=200] (\x,{Wfun(\x)}) node[right] {$W$};
\node[font=\footnotesize] at (1.75,0.45) {area $\lesssim\eps S$};

\node at (-1.6,2.05) {core};
\node at (5.6,2.05) {far field};
\node[paneltag] at (-3.05,\Hw-0.1) {Trace inequality (Lemmas \ref{lem:trace} and \ref{lem:T2})};
\foreach \p/\l in {0/{\eps/\at},\Sh/{r_1/2},\S/{r_1},\Rz/{r_0}}
  \draw (\p,0.08) -- (\p,-0.08) node[below] {$\l$};
\draw[decorate,decoration={brace,mirror,amplitude=5pt}] (0,-1.55) -- (\S,-1.55)
  node[midway,below=5pt] {$S=\log K$ scales};
\end{tikzpicture}
\caption{The swirl $W(r)\coloneqq\sup_z|\Gamma(t,r,z)|$ as it appears in the two near-axis inequalities. In the worst case, $W\approx\eps$ across the $S=\log K$ scales between the inner core and $r_1$. The Hardy inequality (top) recovers $J$ and $\Omega$ at radius $r$ by integrating out to $r_1$, so the swirl at $r$ is counted once for each of the $\log(r_1/r)$ scales in between, and the area is of size $\eps S^2$. The trace inequality (bottom) integrates only in $z$, so each scale is counted once and the area is of size $\eps S$. This new scheme for estimating $T_2$ is the main source of improvement compared to \cite{W16}.}
\label{fig:scales}
\end{figure}

Let us point out what appears to be the main obstruction to pushing this regularity scheme further. Since $L\sim\eps S^2$, even an estimate of $T_2$ with $\Sigma \sim\eps$ would only give $k=1$, so reaching the critical case $k=0$ would require a better estimate of $T_1$. Unfortunately, the trace inequality does not seem to help, since $J$ and $\Omega$ need not vanish on the axis and the energy can control only one derivative of each. Thus, further improvements of the regularity criteria \eqref{eq:logk} below $k=1$, let alone $k=0$ where we would reach a criterion with $\Gamma$ small below an absolute threshold, appear to require substantially new ideas.

The plan of the paper is as follows. In \S\ref{sec:prelim}, we collect some preliminaries, including the refined estimate on the inner vortex core (Lemma~\ref{rad_lemma}). In \S\ref{sec:ineq}, we recall Wei's truncated Hardy inequality, which will control $T_1$, and prove the trace inequality, which is used in the estimate of $T_2$ in \S\ref{sec:T2}. Finally, in \S\ref{sec:proof}, we carry out the energy argument and prove Theorems~\ref{thm:main} and~\ref{thm:quant}.

\subsection{Some extensions of the main theorems}

We make two remarks about extensions of these ideas which we do not pursue in this work.

\begin{remark}\label{rem:avg}
Of the three places where the proof of Theorem~\ref{thm:quant} uses $|\Gamma|\le\eps$, only the first term of \eqref{eq:T2} needs a pointwise bound. In Lemmas~\ref{lem:hardy} and~\ref{lem:T2}, outside the core, $\Gamma$ enters only through the integrals $\int\Gamma^\sharp\log\frac{r_1}r\,\frac{dr}r$ and $\int(\Gamma^\sharp)^2\frac{dr}r$ over the $S$ scales below $r_1$, where we have set \begin{align*}
\Gamma^\sharp(r):=\sup_{z,t}|\Gamma|.
\end{align*}
Applying Proposition~\ref{prop:energy} to these integrals as $L$ and $\Sigma^2$, instead of bounding them by $\eps S^2$ and $\eps^2S$, one can show the following: we have regularity if both
\begin{align*}
A\coloneqq\limsup_{\ell\to\infty}\ell^{3/4}\Gamma^\sharp(e^{-\ell})<\infty\qquad\text{and}\qquad\limsup_{\ell\to\infty}\ell^{3/2}\int_\ell^{2\ell}\Gamma^\sharp(e^{-\ell'})^2\,d\ell'\leq c\min\{1,A^{-2}\}.
\end{align*}
Thus, the pointwise bound \eqref{eq:logk} is needed only with $k=3/4$, provided $\Gamma^2$ decays like $(\log r^{-1})^{-5/2}$ on average over the windows $[\ell,2\ell]$. In particular, this criterion admits $|\Gamma|\sim(\log r^{-1})^{-3/4}$ on a sparse sequence of annuli, so this is a strengthening of Theorem~\ref{thm:main}.
\end{remark}

\begin{remark}[A local version]\label{rem:local}
The same argument from our main theorems lowers the exponent in Lei and Ren's quantitative local criterion \cite[Theorem~C]{LR24} from $\frac32$ to $\frac54$. For an axisymmetric suitable weak solution in $Q(1)$, regularity in $\overline{Q(\frac12)}$ follows from
\begin{align*}
\|\Gamma\|_{L^\infty(Q(1))}\le C^{-1}\big((\min\{H,G\}+1)\log(G+2)\big)^{-5/4},    
\end{align*}
where $G$ and $H$ are the local critical quantities of \cite{LR24}. Lei and Ren localize the energy argument from \cite{W16} near the axis, with cutoff errors bounded by $e^{C\mathcal X}$, where $\mathcal X=(\min\{H,G\}+1)\log(G+2)$, so the argument closes once $S\gtrsim\mathcal X$. In the proof of Theorem~\ref{thm:quant}, we enlarge the window $S\sim\eps^{-2/3}$ to $S\sim\eps^{-4/5}$, giving the improvement to $\frac54$. Moreover, the new ingredients introduced here can be localized with errors of a comparable size.
\end{remark}

\subsection*{AI Acknowledgment}

The author used Claude 5.5 Opus to accelerate drafting the manuscript, simplifying and clarifying human-produced arguments, revising the text, and producing the TikZ figure. The model was also responsible for discovering and verifying the extra claims in Remarks \ref{rem:avg}--\ref{rem:local}. The final words are those of the author, who takes full responsibility for the content and correctness of the manuscript.

\section{Preliminaries}\label{sec:prelim}

In this section, we fix some notation and recall some standard facts about the axisymmetric Navier--Stokes equations. First, it is useful to record that
\begin{align*}
    |\nabla u|^2=|\nabla\ur|^2+|\nabla\ut|^2+|\nabla\uz|^2+(\ur/r)^2+(\ut/r)^2,
\end{align*}
which implies the pointwise bound
\begin{align}\label{pointwise_bound}
    \frac{|\ut|}r,\,\frac{|\ur|}r\leq |\nabla u|.
\end{align}
We may write the vorticity in cylindrical coordinates $\omega=\omega^re_r+\omega^\theta e_\theta+\omega^ze_z$ with
\begin{align*}
    \omega^r=-\partial_z\ut,\quad \omega^\theta=\partial_z\ur-\partial_r\uz, \quad \omega^z=\frac1r\partial_r(r\ut).
\end{align*}
Let us also record the maximum principle (see, e.g., \cite{CSTY08,CL02}): we have
\begin{align}\label{max_principle}
    |\Gamma(t,r,z)|\leq M\qquad \forall t\in[0,T_*),
\end{align}
or equivalently $|\ut|\leq M/r$.

Next, from the equations \eqref{eq:OJintro} for $\Omega$ and $J$, we have the following energy identities. These can be found, for instance, in \cite{CFZ17}.

\begin{lemma}\label{lem:energy}
For $t\in(0,T_*)$, we have
\begin{align}
\frac{d}{dt}\|\Omega\|_2^2+2\|\nabla\Omega\|_2^2+2\int_\R\Omega(t,0,z)^2dz&=T_1,\label{eq:EO}\\
\frac{d}{dt}\|J\|_2^2+2\|\nabla J\|_2^2+2\int_\R J(t,0,z)^2dz&=2T_2,\label{eq:EJ}
\end{align}
where
\begin{align}
T_1=-4\int\frac{\ut}rJ\Omega\,dx
\end{align}
and $T_2$, defined in \S\ref{sec:strategy}, can be written as
\begin{align}\label{eq:T2}
T_2=\int\ut\Big(\partial_zJ\,\partial_r\frac{\ur}r-\partial_rJ\,\partial_z\frac{\ur}r\Big)\,dx .
\end{align}
\end{lemma}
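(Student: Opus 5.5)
The plan is to derive both identities by testing the equations \eqref{eq:OJintro} against $\Omega$ and $J$ respectively, using the measure $dx=r\,dr\,dz$. I would then integrate by parts and keep track of the one boundary term on the axis that the operator $\Delta+\frac2r\partial_r$ produces.

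\textbf{Step 1: regularity and decay.} For $t<T_*$ the solution is smooth and $u\in H^2$. Axisymmetric smooth fields have $\ur/r$ and $\ut/r$ smooth and even in $r$, and $\ut,\ur=O(r)$ at the axis. Hence $\Omega$ and $J$ are smooth up to $r=0$. With standard decay in $z$ and at infinity (by approximation if needed), all boundary terms at infinity vanish. This justification is the only delicate point; everything else is algebra.

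\textbf{Step 2: the $\Omega$ identity.} Multiply the first equation of \eqref{eq:OJintro} by $2\Omega$ and integrate against $r\,dr\,dz$.
\begin{itemize}
\item The transport term vanishes. Indeed $\int b\cdot\nabla(\Omega^2)\,dx=0$, because $dx$ is the three-dimensional Lebesgue measure up to the factor $2\pi$ and $\operatorname{div}b=0$.
\item For the diffusion, write $\Delta=\frac1r\partial_r(r\partial_r)+\partial_z^2$. Integrating by parts, $-\int\Omega\,\frac1r\partial_r(r\partial_r\Omega)\,r\,dr=\int|\partial_r\Omega|^2\,r\,dr$. The boundary term at $r=0$ vanishes because of the factor $r$.
\item The extra drift term gives $-\int\Omega\,\frac2r\partial_r\Omega\,r\,dr=-\int_0^\infty\partial_r(\Omega^2)\,dr=\Omega(t,0,z)^2$. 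This is exactly the axis term in \eqref{eq:EO}.
\item The source $-2\frac{\ut}{r}J$ tested against $2\Omega$ produces $T_1$.
\end{itemize}

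\textbf{Step 3: the $J$ identity.} The same computation applied to the second equation of \eqref{eq:OJintro} gives \eqref{eq:EJ}, with right-hand side $2T_2$ and $T_2$ as in \eqref{T1T2}.

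\textbf{Step 4: rewriting $T_2$.} Set $g=\ur/r$ and insert $\omega^r=-\partial_z\ut$ and $\omega^z=\frac1r\partial_r(r\ut)$, so that
\[
T_2=\int\Big(-J\,\partial_z\ut\,\partial_rg\,r+J\,\partial_r(r\ut)\,\partial_zg\Big)\,dr\,dz .
\]
\begin{itemize}
\item In the first term, integrate by parts in $z$. This gives $\int\ut\big(\partial_zJ\,\partial_rg+J\,\partial_z\partial_rg\big)\,dx$.
\item In the second term, integrate by parts in $r$. The boundary term $r\ut J\partial_zg$ vanishes at $r=0$ since $r\ut=O(r^2)$. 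This gives $-\int\ut\big(\partial_rJ\,\partial_zg+J\,\partial_r\partial_zg\big)\,dx$.
\end{itemize}
The mixed second-derivative terms $J\,\partial_z\partial_r g$ cancel, leaving exactly \eqref{eq:T2}.
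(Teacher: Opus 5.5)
Your proposal is correct and follows the paper's argument. The energy identities come from the routine testing computation, with the axis term produced by $\frac{2}{r}\partial_r$. Your rewriting of $T_2$ uses $r\omega^r=-\partial_z\Gamma$ and $r\omega^z=\partial_r\Gamma$, two integrations by parts whose mixed second derivatives cancel, and a boundary term killed by $\Gamma=O(r^2)$; this is exactly the paper's ``perpendicular operators'' step written out in components.
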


\begin{proof}
    The identities \eqref{eq:EO}--\eqref{eq:EJ} with $T_1$ and $T_2$ as in \eqref{T1T2} are routine. To see that $T_2$ can be rewritten as in \eqref{eq:T2}, note that $r(\omega^r,\omega^z)=(-\partial_z\Gamma,\partial_r\Gamma)$. Then we compute, starting from \eqref{T1T2},
    \begin{align*}
        T_2&=\int J(\omega^r\partial_r+\omega^z\partial_z)\frac{u^r}rdx\\
        &=2\pi\int\int J(-\partial_z,\partial_r)\Gamma\cdot(\partial_r,\partial_z)\frac{u^r}rdrdz\\
        &=2\pi\int\int(\partial_z,-\partial_r)J\cdot(\partial_r,\partial_z)\frac{u^r}r\Gamma\,drdz
    \end{align*}
    where, in the final equality, we use that the two differential operators are perpendicular in $(r,z)$, and that the boundary term vanishes thanks to $\Gamma$. The conclusion \eqref{eq:T2} follows because $2\pi\Gamma drdz=u^\theta dx$.
\end{proof}

We now record some bounds on $u^r/r$ which were proved in \cite{W16}; see also \cite{HLL08}.

\begin{lemma}\label{lem:f}
The quantity $u^r/r$ solves
\begin{align*}
    \Big(\Delta+\frac2r\partial_r\Big) \frac{u^r}r=\partial_z\Omega
\end{align*}
and obeys the inequalities
\begin{align*}
    \|\nabla \frac{u^r}r\|_2\leq\|\Omega\|_2,\qquad \|\nabla^2\frac{u^r}r\|_2\leq\|\partial_z\Omega\|_2,\qquad \frac1r\Big|\partial_r\frac{u^r}r\Big|\leq\Big|\nabla^2\frac{u^r}r\Big|.
\end{align*}
\end{lemma}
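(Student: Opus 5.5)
Write $f\coloneqq \ur/r$. The plan is to establish the elliptic equation for $f$ first, and then to derive the three inequalities by integration by parts against the measure $dx$. The key observation is that $\Delta+\frac2r\partial_r$ is exactly the Laplacian in $\R^5$ acting on functions that are radial in the first four variables. So $f$ can be viewed as an axisymmetric function on $\R^5$, and both the $L^2$ and the Hessian bounds reduce to standard facts there.

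\textbf{Step 1: the equation.} From the stream function, $\ur=-\partial_z\psi$, so $f=-\partial_z(\psi/r)$. The identity
\begin{align*}
\Big(\Delta-\frac1{r^2}\Big)\psi = r\Big(\Delta+\frac2r\partial_r\Big)\frac{\psi}{r}
\end{align*}
is a direct computation in cylindrical coordinates. Combined with $-(\Delta-r^{-2})\psi=\omega^\theta=r\Omega$, it gives $(\Delta+\frac2r\partial_r)(\psi/r)=-\Omega$. Applying $-\partial_z$, which commutes with the operator, yields $(\Delta+\frac2r\partial_r)f=\partial_z\Omega$.

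\textbf{Step 2: the first-order bound.} Pair the equation with $f$ under $dx=r\,dr\,dz$ (times $2\pi$). On the one hand,
\begin{align*}
\int f\Big(\Delta+\frac2r\partial_r\Big)f\,dx=-\|\nabla f\|_2^2-\int_\R f(0,z)^2\,dz,
\end{align*}
since $\int \frac2r f\partial_r f\, r\,dr\,dz=\int\partial_r(f^2)\,dr\,dz$ produces the boundary term at $r=0$. On the other hand, the right side is $\int f\partial_z\Omega\,dx=-\int\partial_zf\,\Omega\,dx$. Hence $\|\nabla f\|_2^2\le\|\partial_zf\|_2\|\Omega\|_2\le\|\nabla f\|_2\|\Omega\|_2$.

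\textbf{Step 3: the second-order bound.} The cleanest route is the lift to $\R^5$. With $\tilde f(y,z)=f(|y|,z)$ for $y\in\R^4$, we have $\Delta_{\R^5}\tilde f=\partial_z\tilde\Omega$, and $\|\nabla^2\tilde f\|_{L^2(\R^5)}=\|\Delta\tilde f\|_{L^2(\R^5)}$ by Plancherel. The $\R^5$ measure is $r^3\,dr\,dz$, not $r\,dr\,dz$, so this must instead be done directly in $\R^3$ with the weighted operator. I would integrate $\|(\Delta+\frac2r\partial_r)f\|_2^2$ by parts, expecting
\begin{align*}
\|\partial_r^2f\|^2+2\|\partial_{rz}f\|^2+\|\partial_z^2f\|^2+3\Big\|\frac{\partial_rf}{r}\Big\|^2+\text{boundary terms at } r=0\ \ge0.
\end{align*}
In this expression, the cross term $2\int \partial_r^2f\,\frac{3}{r}\partial_rf\,dx$ becomes $3\int\partial_r((\partial_rf)^2)\,dr\,dz$, which vanishes since $\partial_rf(0,z)=0$ by smoothness and evenness of $f$ in $r$. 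The Hessian of the $\R^3$ function $f(r,z)$ has squared norm $|\partial_r^2f|^2+2|\partial_{rz}f|^2+|\partial_z^2f|^2+|\partial_rf/r|^2$, which is dominated by the left side. Then $\|\nabla^2f\|_2\le\|(\Delta+\frac2r\partial_r)f\|_2=\|\partial_z\Omega\|_2$.

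\textbf{Step 4: the pointwise bound.} In Cartesian coordinates, the angular eigenvalue of the Hessian of a radial-in-$(x_1,x_2)$ function is $\partial_rf/r$. For instance, at a point on the $x_1$-axis, $\partial_{x_2x_2}f=\frac1r\partial_rf$. So $\frac1r|\partial_rf|\le|\nabla^2f|$ pointwise.

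\textbf{Main obstacle.} The delicate part is the bookkeeping in Step 3: checking that all cross terms have favorable signs, and that the boundary terms at the axis vanish or are nonnegative, which requires the regularity of $f$ near $r=0$. As a fallback, I would verify the $\R^5$-Plancherel identity for the weighted operator on smooth compactly supported even functions and then argue by density.
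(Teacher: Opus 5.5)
Your proof is correct. The paper gives no argument for this lemma and simply quotes it from \cite{W16} (see also \cite{HLL08}), so your proposal serves as a self-contained verification.

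Steps 1, 2 and 4 are fine as written. Note that $\ur/r\in L^2$ by \eqref{pointwise_bound}, which justifies dropping the boundary terms at infinity in Step 2.

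The only correction concerns the bookkeeping you flagged in Step 3. It closes, but not with the terms you anticipated. With $f=\ur/r$:
\begin{itemize}
\item The square $(\frac3r\partial_rf)^2$ contributes $9\|\partial_rf/r\|_2^2$, not $3\|\partial_rf/r\|_2^2$. The coefficient $3$ is the one for the $\R^5$ Hessian with measure $r^3\,dr\,dz$.
\item The $\partial_r^2f\,\partial_rf$ cross term vanishes, as you say.
\item The two cross terms containing $\partial_z^2f$ each produce an axis term, and these have opposite signs:
\begin{align*}
2\int\partial_r^2f\,\partial_z^2f\,dx&=2\|\partial_r\partial_zf\|_2^2-\int_\R(\partial_zf)^2(0,z)\,dz,\\
2\int\frac3r\,\partial_rf\,\partial_z^2f\,dx&=3\int_\R(\partial_zf)^2(0,z)\,dz.
\end{align*}
\end{itemize}
So one individual boundary contribution has the unfavorable sign, but the total is
\begin{align*}
\|\partial_z\Omega\|_2^2=\|\partial_r^2f\|_2^2+2\|\partial_r\partial_zf\|_2^2+\|\partial_z^2f\|_2^2+9\Big\|\frac{\partial_rf}r\Big\|_2^2+2\int_\R(\partial_zf)^2(0,z)\,dz\geq\|\nabla^2f\|_2^2 .
\end{align*}

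A slightly more economical route is to keep the operator in the form $\Delta+\frac2r\partial_r$ and expand the square. The cross term is
\begin{align*}
2\int\Delta f\,\frac2r\partial_rf\,dx=4\Big\|\frac{\partial_rf}r\Big\|_2^2+2\int_\R(\partial_zf)^2(0,z)\,dz,
\end{align*}
which is nonnegative. Hence $\|\nabla^2f\|_2=\|\Delta f\|_2\le\|\partial_z\Omega\|_2$ by Plancherel in $\R^3$, with no need to expand the Hessian in cylindrical coordinates.
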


For the swirl component of the velocity, we define the following quantity representing the velocity scale near the axis at a fixed time,
\begin{align}\label{lambda_def}
\at(t)\coloneqq \big(\|J(t)\|_2\,\|\ut(t)/r\|_2\big)^{1/2}\leq\big(\|J(t)\|_2\,\|\nabla u(t)\|_2\big)^{1/2},
\end{align}
where the last inequality follows from \eqref{pointwise_bound}. As a comparison, Wei in \cite{W16} used the quantity $a(t)\coloneqq\sup_{r,z}\frac1r\int_0^r|\ut(t,r',z)|\,dr'$ for the same purpose and proved that $a(t)\leq\at(t)$. The following lemma strengthens this by moving the supremum in $z$ inside the integral. This is needed in the proof of Lemma~\ref{lem:T2}, where $\int_0^{r_1}\sup_z\Gamma^2\,dr/r$ must be controlled down to the axis.
\begin{lemma}\label{rad_lemma}
For $t\in(0,T_*)$ and $\rho>0$, we have
\begin{align*}
\int_0^\rho\sup_{z\in\R}|\ut(t,r,z)|\,dr\leq\rho\,\at(t)
\end{align*}
where $\at(t)$ is as in \eqref{lambda_def}.
\end{lemma}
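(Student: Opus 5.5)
The plan is a one-dimensional Agmon-type bound in $z$ at each fixed $r$, followed by Cauchy--Schwarz in $r$. Time $t$ is fixed throughout, and we use that $u(t)\in H^2$ is smooth away from the axis, with $\ut(t,r,\cdot)\to0$ as $|z|\to\infty$ for each $r>0$ (decay of $H^2$ functions, or an approximation argument).

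\emph{Step 1: the bound in $z$.} Fix $r>0$ and set $f(z)=\ut(t,r,z)$. Since $f$ vanishes at $\pm\infty$, we can write
\begin{align*}
f(z)^2=\int_{-\infty}^z\partial_z(f^2)\,dz'=-\int_z^\infty\partial_z(f^2)\,dz'.
\end{align*}
Averaging these two representations gives
\begin{align*}
\sup_z f^2\le\tfrac12\int_\R|\partial_z(f^2)|\,dz\le\int_\R|\ut|\,|\partial_z\ut|\,dz.
\end{align*}
Now use $\partial_z\ut=-rJ$ and $\ut=r\cdot(\ut/r)$. This turns the bound into
\begin{align*}
\sup_z|\ut(t,r,z)|^2\le r^2\int_\R\Big|\frac{\ut}r\Big|\,|J|\,dz.
\end{align*}
Averaging the two one-sided representations is exactly what removes the factor $2$, so that the constant in the lemma comes out as $1$.

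\emph{Step 2: Cauchy--Schwarz in $r$.} First,
\begin{align*}
\int_0^\rho\sup_z|\ut|\,dr\le\rho^{1/2}\Big(\int_0^\rho\sup_z|\ut|^2\,dr\Big)^{1/2}.
\end{align*}
Insert Step 1 and bound $r^2\le\rho\, r$ on $(0,\rho)$. This gives
\begin{align*}
\int_0^\rho\sup_z|\ut|^2\,dr\le\rho\int_0^\rho\!\!\int_\R\Big|\frac{\ut}r\Big|\,|J|\,r\,dz\,dr\le\rho\,\Big\|\frac{\ut}r\Big\|_2\|J\|_2=\rho\,\at(t)^2,
\end{align*}
where the last inequality is Cauchy--Schwarz with respect to the measure $dx=r\,dr\,dz$. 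Combining the two displays yields $\int_0^\rho\sup_z|\ut|\,dr\le\rho^{1/2}\cdot\rho^{1/2}\at(t)=\rho\,\at(t)$, which is the claim.

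\emph{Main obstacle.} The only delicate point is justifying the boundary behavior in $z$ for each fixed $r$, i.e.\ that $\ut(t,r,z)\to0$ as $|z|\to\infty$. I would handle it either by noting that $\ut(t,r,\cdot)\in H^1(\R)$ for a.e.\ $r$ (this follows from $u\in H^2$ and Fubini), or by proving the inequality first for smooth compactly supported axisymmetric approximants and then passing to the limit.
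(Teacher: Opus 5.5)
Your proof is correct and follows essentially the paper's route: a fundamental-theorem-of-calculus bound in $z$ using $\partial_z\ut=-rJ$, then Cauchy--Schwarz in $r$ and then in $dx$. The only difference is where the factor $2$ goes. You save it by averaging the two one-sided representations and then spend it through the unweighted Cauchy--Schwarz and $r^2\le\rho r$. The paper instead keeps the one-sided bound and recovers the $2$ from the weighted Cauchy--Schwarz with $\int_0^\rho r\,dr=\rho^2/2$; combining both tricks would even give $\rho\at/\sqrt2$.
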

\begin{proof}
Since $\ut(t,r,z)\to0$ as $z\to\infty$ and
\begin{align*}
\partial_z|\ut|^2=2\ut\partial_z\ut=-2r\ut J,
\end{align*}
we have
\begin{align*}
|\ut(r,z)|^2=\int_z^\infty2r\ut J\,dz'\le2r\int_\R|\ut J|(r,z')\,dz'
\end{align*}
for every $z$. Hence, by repeated applications of Cauchy--Schwarz, we conclude
\begin{align*}
\int_0^\rho\sup_z|\ut|\,dr&\leq\Big(\int_0^\rho r\,dr\Big)^{1/2}\Big(\int_0^\rho\frac{\sup_z|\ut|^2}r\,dr\Big)^{1/2}\\
&\leq\frac\rho{\sqrt2}\Big(2\iint|\ut J|\,dr\,dz\Big)^{1/2}\\
&=\rho\Big(\int\frac{|\ut|}r|J|\,dx\Big)^{1/2}\\
&\leq\rho\at .
\end{align*}
\end{proof}

\section{Hardy and trace inequalities}\label{sec:ineq}

In this section, we collect the two essential ingredients for controlling the solution near the axis.

\subsection{The truncated Hardy inequality}
We begin with a Hardy-type inequality for the weight $|u^\theta|/r=|\Gamma|/r^2$ truncated at the radius $r_1$, which will be useful for estimating $T_1$. The following statement is adapted from Lemma 2.3 in \cite{W16}.

\begin{lemma}\label{lem:hardy}
Let $t\in(0,T_*)$, $0<\eps\leq1$, and $K\geq1$, and set $L\coloneqq \eps(1+\log K)^2$. Let $r_1>0$ with $r_1\leq\eps K/\at(t)$ and
\begin{align*}
|\Gamma(t)|\leq\eps\text{ on }\{r\leq r_1\}.
\end{align*}
Then for every smooth axisymmetric scalar function $h\in H^1 $, we have
\begin{align*}
\int\frac{|\ut|}r|h|^2dx\leq L\,\|\partial_rh\|_2^2+O\Big(\frac{L+M}{r_1^2}\int_{r\geq r_1/2}|h|^2dx\Big).
\end{align*}
\end{lemma}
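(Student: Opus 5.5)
We split the integral with a smooth radial cutoff $\chi(r)$ equal to $1$ on $\{r\le r_1/2\}$, supported in $\{r\le r_1\}$, with $|\chi'|\lesssim r_1^{-1}$. On the support of $1-\chi$ we simply use the maximum principle \eqref{max_principle}, $|\ut|/r=|\Gamma|/r^2\le M/r^2\lesssim M/r_1^2$ for $r\ge r_1/2$. This gives the far-field error term $O(Mr_1^{-2}\int_{r\ge r_1/2}|h|^2dx)$. The main work is the near part $\int\chi\frac{|\ut|}r|h|^2dx$. Since everything is done at fixed $z$ and the weight only matters through its radial profile, it suffices to prove a one-dimensional inequality in $r$ for each $z$ and then integrate in $z$.

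Fix $z$ and write $g=\chi h$, which vanishes for $r\ge r_1$. The key representation is
\begin{align*}
|g(r)|^2=-\int_r^{r_1}\partial_s|g|^2\,ds\le 2\int_r^{r_1}|g||\partial_sg|\,ds .
\end{align*}
Insert this into $\int_0^{r_1}|\ut(r,z)||g(r)|^2\,dr$; note $\frac{|\ut|}r\,r\,dr=|\ut|\,dr$. Exchanging the order of integration gives
\begin{align*}
\int_0^{r_1}|\ut||g|^2\,dr\le 2\int_0^{r_1}|g(s)||\partial_sg(s)|\,U(s)\,ds,\qquad U(s)\coloneqq\int_0^s|\ut(r,z)|\,dr .
\end{align*}
We bound $U(s)$ in two regimes. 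Set $\rho=\eps/\at$. For $s\le\rho$, Lemma~\ref{rad_lemma} (or just its fixed-$z$ version) gives $U(s)\le s\at\le\eps$. For $\rho\le s\le r_1$, we use $|\ut|\le\eps/r$ together with the previous bound at $\rho$, which gives
\begin{align*}
U(s)\le\eps+\eps\log(s/\rho)\le\eps(1+\log K),
\end{align*}
since $r_1\le K\rho$. Thus $U(s)\le\eps(1+\log K)\,s\cdot s^{-1}$. We want to compare this with the measure $s\,ds$, so $U(s)/s$ is the effective weight. Cauchy--Schwarz then gives
\begin{align*}
\int |g||\partial_s g|U\,ds\le\Big(\int|\partial_sg|^2s\,ds\Big)^{1/2}\Big(\int|g|^2\frac{U^2}{s}\,ds\Big)^{1/2}.
\end{align*}

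The main obstacle is that the last factor $\int|g|^2U^2\,ds/s$ is a weight of type $r^{-2}$ against $r\,dr$ and cannot be bounded by $\|g\|_2^2$. I would handle it by iterating: $U(s)^2/s\lesssim \eps(1+\log K)\,U(s)/s$. Its integral against $|g|^2$ is again of the same form as the left-hand side but with weight $\int_0^s|\ut|$ averaged. A cleaner route, which is what I expect Wei does, is to apply the representation a second time or to use the logarithmic Hardy inequality $\int_0^{r_1}|g|^2\frac{ds}{s\log^2(er_1/s)}\lesssim\int|\partial_sg|^2s\,ds$, valid for $g(r_1)=0$. One then accepts $U(s)\le\eps(1+\log(r_1/s))$ only on $[\rho,r_1]$ and $U(s)\le\eps\, s/\rho$ inside. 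This yields $U^2/s\lesssim\eps^2(1+\log K)^4\cdot\frac{1}{s\log^2(er_1/s)}$ on the outer range. On the core the weight is bounded by $\eps^2 s/\rho^2$ and is handled by the same inequality. Combined with Cauchy--Schwarz and Young, the near part is then bounded by $C\eps(1+\log K)^2\int|\partial_rg|^2dx$. It remains to absorb the constants into $L$ (possibly after adjusting the definition by an absolute factor) and to expand $\partial_rg=\chi\partial_rh+\chi'h$. The $\chi'$ term produces $Lr_1^{-2}\int_{r_1/2\le r\le r_1}|h|^2dx$, which is exactly the remaining error term.
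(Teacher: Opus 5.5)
Your argument takes a different route from the paper's, and its structure is sound. As written, however, it proves the inequality only with $L$ replaced by $CL$ for an absolute $C>1$. The lemma asserts the coefficient exactly $L=\eps(1+\log K)^2$ in front of $\|\partial_rh\|_2^2$; only the far-field term carries an $O(\cdot)$. You lose the constant in two places:
\begin{itemize}
\item The step $|g|^2\le 2\int|g||\partial_sg|$, followed by Cauchy--Schwarz and the logarithmic Hardy inequality (constant $4$), gives the bound $2\cdot2\cdot\sup_s U(s)\bigl(1+\log\tfrac{r_1}{s}\bigr)\int|\partial_sg|^2s\,ds$. Your bounds give this supremum as $\eps(1+\log K)^2$, so you get $4L$.
\item Expanding $\partial_r(\chi h)=\chi\partial_rh+\chi'h$ via $(a+b)^2\le2a^2+2b^2$ or Young loses a further factor.
\end{itemize}
So ``absorbing the constants into $L$'' really means changing the statement. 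This is harmless for the paper's purposes. Proposition~\ref{prop:energy} treats $L$ as a free parameter, and its exact cancellation of $2\|\partial_r\Omega\|_2^2$ only needs the same $L$ in \eqref{eq:H} and in the Young splitting of $T_1$. One could therefore take $L=C\eps(1+S)^2$ and shrink $c_0$ by $C^2$.

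The paper gets the sharp constant in one step, with no second Hardy inequality:
\begin{itemize}
\item Cauchy--Schwarz against $r\,dr$ gives $|h(r')|^2\le\log\frac{r_1}{r'}\int_{r'}^{r_1}r|\partial_rh|^2dr$.
\item Enlarging the inner integral to $\int_0^\infty$ decouples it.
\item What remains is the weight integral $\int_0^{r_1}|\ut|\log\frac{r_1}{r'}\,dr'=\int_0^{r_1}v\,\frac{dr}{r}$, after integrating by parts, where $v=\int_0^r|\ut|$ is your $U$.
\item Your own two-regime bound on $U$ then gives $\int_0^{r_1}v\,\frac{dr}{r}\le\eps+\eps\int_0^{\log K}(1+s)\,ds\le L$ directly.
\end{itemize}
For the cutoff, the paper writes the cross term as $\varphi_1\varphi_1'\,\partial_r|h|^2$ and integrates by parts in $r$, moving it into the far-field error. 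Since $\varphi_1^2\le1$, the coefficient of $\|\partial_rh\|_2^2$ stays exactly $1$. The paper's route is shorter and sharp; yours is more roundabout and yields only $O(L)$.

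Two minor slips:
\begin{itemize}
\item On $[\rho,r_1]$ the correct bound is $U(s)\le\eps\bigl(1+\log\frac{s}{\rho}\bigr)$, which increases in $s$, not $\eps\bigl(1+\log\frac{r_1}{s}\bigr)$. This does no harm, because afterwards you only use $U\le\eps(1+\log K)$.
\item Your near-field estimate controls $\int\chi^2\frac{|\ut|}{r}|h|^2dx$, not $\int\chi\frac{|\ut|}{r}|h|^2dx$. The split should therefore be $1=\chi^2+(1-\chi^2)$, which is fine since $1-\chi^2$ is also supported in $\{r\ge r_1/2\}$.
\end{itemize}
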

\begin{proof}
We first consider the case that $h=0$ for $r\geq r_1$. For $0<r'<r_1$, Cauchy--Schwarz gives
\begin{align*}
|h(r',z)|^2=\Big|\int_{r'}^{r_1}\partial_rh\,dr\Big|^2\leq\int_{r'}^{r_1}r|\partial_rh|^2dr\log\frac{r_1}{r'} .
\end{align*}
Defining
\begin{align*}
v(r,z)\coloneqq \int_0^r|\ut(r'',z)|\,dr''
\end{align*}
and applying Fubini's theorem followed by integration by parts in $r'$, we find
\begin{align*}
\int_0^{r_1}|\ut(r',z)||h(r',z)|^2dr'&\leq\int_0^\infty r|\partial_rh|^2dr\cdot\int_0^{r_1}|\ut(r',z)|\log\frac{r_1}{r'}dr'\\
&=\int_0^\infty r|\partial_rh|^2dr\cdot\int_0^{r_1}v(r,z)\frac{dr}r .
\end{align*}
Next, observe that $v(r,z)\leq r\at$ for all $r$ by Lemma~\ref{rad_lemma}. For $\eps/\at\leq r\leq r_1$, we have $|\ut|=|\Gamma|/r\leq\eps/r$, and hence, by the fundamental theorem of calculus,
\begin{align*}
v(r,z)\leq v(\eps/\at,z)+\int_{\eps/\at}^r|u^\theta(r',z)|dr'\leq \eps+\eps\log\frac{r\at}\eps.
\end{align*}
If $r_1\leq\eps/\at$, then we estimate
\begin{align*}
\int_0^{r_1}v\,\frac{dr}r\leq\at r_1\leq\eps\leq L.
\end{align*}
Otherwise,
\begin{align*}
\int_0^{r_1}v\frac{dr}r\leq\int_0^{\eps/\at}\at\,dr+\int_{\eps/\at}^{\eps K/\at}\eps\Big(1+\log\frac{r\at}\eps\Big)\frac{dr}r=\eps+\eps\int_0^{\log K}(1+s)\,ds\leq L .
\end{align*}
Integrating in $z$ gives
\begin{align*}
\int\frac{|\ut|}r|h|^2dx=\iint|\ut||h|^2dr\,dz\leq L\|\partial_rh\|_2^2 .
\end{align*}

Now consider general $h$. We let $\varphi:[0,\infty)\to[0,1]$ smooth and decreasing with $\varphi=1$ on $[0,\frac12]$ and $\varphi=0$ on $[1,\infty)$, then define $\varphi_1(r)=\varphi(r/r_1)$. Applying the above to $\varphi_1h$, and using the swirl a priori bound on the rest, we obtain
\begin{align*}
\int\frac{|\ut|}r|h|^2dx\leq L\|\partial_r(\varphi_1h)\|_2^2+\frac{4M}{r_1^2}\int_{r\geq r_1/2}|h|^2dx .
\end{align*}
Next, we expand
\begin{align*}
\|\partial_r(\varphi_1h)\|_2^2=\int\big(\varphi_1^2|\partial_rh|^2+(\varphi_1')^2|h|^2\big)dx+\iint\varphi_1\varphi_1'\,\partial_r|h|^2\,r\,dr\,dz .
\end{align*}
We integrate by parts in the last term and use that $|\partial_r(r\varphi_1\varphi_1')|\lesssim r_1^{-1}$ on $[r_1/2,r_1]$ to bound
\begin{align*}
\Big|\iint|h|^2\partial_r(r\varphi_1\varphi_1')\,dr\,dz\Big|\lesssim\frac1{r_1^2}\int_{r\geq r_1/2}|h|^2dx.
\end{align*}
Thus
\begin{align*}
\|\partial_r(\varphi_1h)\|_2^2\leq\|\partial_rh\|_2^2+O(r_1^{-2}\int_{r\geq r_1/2}|h|^2dx),
\end{align*}
and the lemma follows.
\end{proof}

\subsection{Trace estimate near the axis}

Next we prove a trace inequality for $\partial_z(\ur/r)$ on the cylinders $\{r=\text{const}\}$, uniform in $r$, which we use in place of a Hardy inequality to estimate $T_2$.

\begin{lemma}\label{lem:trace}
Let $g\in H^2(\R^3)$ be axisymmetric  and write $x=(x',z)$ with $x'\in\R^2$. The map $x'\mapsto\partial_zg(x',\cdot)\in L^2(\R)$ has a continuous
representative, and moreover it satisfies
\begin{align*}
\sup_{r>0}\int_\R|\partial_zg(r,z)|^2dz\lesssim\|\Delta g\|_2^2 .
\end{align*}
\end{lemma}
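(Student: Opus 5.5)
Work on the Fourier side. Write $\hat g(\xi',\zeta)$ for the Fourier transform in all of $\R^3$, with $\xi'\in\R^2$ and $\zeta\in\R$. For fixed $x'$, the function $z\mapsto\partial_zg(x',z)$ has one-dimensional Fourier transform
\begin{align*}
F(x',\zeta)=(2\pi)^{-2}\int_{\R^2}i\zeta\,\hat g(\xi',\zeta)e^{ix'\cdot\xi'}d\xi' .
\end{align*}
By Plancherel in $z$, $\int_\R|\partial_zg(x',z)|^2dz$ is comparable to $\int_\R|F(x',\zeta)|^2d\zeta$.

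The plan is to bound $|F(x',\zeta)|$ pointwise in $x'$ by Cauchy--Schwarz in $\xi'$, inserting the weight $|\xi|^2=|\xi'|^2+\zeta^2$:
\begin{align*}
|F(x',\zeta)|^2\lesssim\Big(\int_{\R^2}\frac{\zeta^2}{(|\xi'|^2+\zeta^2)^2}d\xi'\Big)\Big(\int_{\R^2}|\xi|^4|\hat g(\xi',\zeta)|^2d\xi'\Big).
\end{align*}
The first factor scales exactly. Substituting $\xi'=|\zeta|\eta$ gives
\begin{align*}
\zeta^2\cdot\zeta^{2}\cdot\zeta^{-4}\int_{\R^2}\frac{d\eta}{(1+|\eta|^2)^2}=\pi,
\end{align*}
which is a constant independent of $\zeta$. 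This dimension count is the key point: the extra $\zeta$ from $\partial_z$ is exactly what makes the two-dimensional integral converge and be scale-invariant. Integrating in $\zeta$ then gives
\begin{align*}
\int|F(x',\zeta)|^2d\zeta\lesssim\int_{\R^3}|\xi|^4|\hat g|^2d\xi\sim\|\Delta g\|_{L^2(\R^3)}^2,
\end{align*}
uniformly in $x'$. The paper's $\|\cdot\|_2$ differs from the $L^2(\R^3)$ norm only by the factor $(2\pi)^{-1}$, so the same bound holds with the paper's normalization. Axisymmetry is used only to write the supremum over $x'$ as a supremum over $r$.

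For continuity of $x'\mapsto\partial_zg(x',\cdot)\in L^2(\R)$, apply the same estimate to the difference. We have
\begin{align*}
\|F(x',\cdot)-F(y',\cdot)\|_{L^2_\zeta}^2\lesssim\int d\zeta\Big(\int\frac{\zeta^2|e^{ix'\cdot\xi'}-e^{iy'\cdot\xi'}|^2}{|\xi|^4}d\xi'\Big)\Big(\int|\xi|^4|\hat g|^2d\xi'\Big).
\end{align*}
The inner weighted factor is bounded by $4\pi$ but need not be small uniformly in $\zeta$, so I would argue by density instead. For Schwartz $g$, continuity is clear. Then $g\mapsto\partial_zg(x',\cdot)$ extends from Schwartz functions as a map into $C_b(\R^2;L^2(\R))$ that is uniformly bounded by $\|\Delta g\|_2$. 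Since uniform limits of continuous maps are continuous, this gives the continuous representative. It agrees almost everywhere with the actual restriction by Fubini.

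The main obstacle is only bookkeeping: $\|\Delta g\|$ controls the homogeneous $\dot H^2$ norm, which is fine since $g\in H^2$. The density argument must use the full $H^2$ norm to identify limits, and that is harmless.
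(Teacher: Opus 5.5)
Your proposal is correct and matches the paper's proof: take the Fourier transform in all variables, apply Cauchy--Schwarz in $\xi'$ with the weight $|\xi|^4$, observe that $\int_{\R^2}\zeta^2(|\xi'|^2+\zeta^2)^{-2}\,d\xi'$ is a constant ($=\pi$) independent of $\zeta$, and finish with Plancherel in $z$ and then in $\R^3$. Your density argument supplies the continuity of the representative and the identification of $F(x',\cdot)$ with the transform of the restriction, which the paper only asserts; alternatively, dominated convergence in $\zeta$ applied to your difference estimate gives continuity of $x'\mapsto F(x',\cdot)$ directly, since the inner factor is bounded by $4\pi$ and tends to $0$ for each fixed $\zeta\neq0$.
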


\begin{proof}
Let $\hat g(\xi,k)$ be the full Fourier transform of $g$, with $\xi\in\R^2$ corresponding to $x'$ and $k\in\R$ to $z$. For fixed $x'\in\R^2$, define the Fourier transform of $\partial_zg(x',\cdot)$ in $z$,
\begin{align*}
G(x',k)\coloneqq\frac1{(2\pi)^2}\int_{\R^2}e^{ix'\cdot\xi}\,ik\,\hat g(\xi,k)\,d\xi .
\end{align*}
By Cauchy--Schwarz, we estimate
\begin{align*}
|G(x',k)|^2\lesssim\int_{\R^2}(|\xi|^2+k^2)^2|\hat g(\xi,k)|^2\,d\xi=\int_{\R^2}|\widehat{\Delta g}(\xi,k)|^2\,d\xi,
\end{align*}
having used that $\int_{\mathbb R^2}k^2/(|\xi|^2+k^2)^2d\xi=O(1)$. By Plancherel's theorem, first in $z$ and then in all of $\R^3$, we conclude that
\begin{align*}
\int_\R|\partial_zg(x',z)|^2dz=\frac1{2\pi}\int_\R|G(x',k)|^2dk\lesssim\int_{\R^3}|\widehat{\Delta g}|^2\,d\xi\,dk\sim\int_{\R^3}|\Delta g|^2\,dx'\,dz .
\end{align*}
\end{proof}

\section{The stretching term}\label{sec:T2}

We now estimate the stretching term $T_2$ defined in \eqref{eq:T2} under the hypotheses of Lemma~\ref{lem:hardy}, as outlined in \S\ref{sec:strategy}.

\begin{lemma}\label{lem:T2}
Let $t$, $\eps$, $K$, and $r_1$ be as in Lemma~\ref{lem:hardy}. Then
\begin{align*}
|T_2|\lesssim\eps\big((1+\log K)^{1/2}\|\partial_rJ\|_2+\|\partial_zJ\|_2\big)\|\partial_z\Omega\|_2+\frac M{r_1}\|\nabla J\|_{L^2(r\geq r_1)}\|\nabla(\ur/r)\|_{L^2(r\geq r_1)} .
\end{align*}
\end{lemma}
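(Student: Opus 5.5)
We start from the representation \eqref{eq:T2} and split $T_2=T_2^a-T_2^b$, where $T_2^a\coloneqq\int\ut\,\partial_zJ\,\partial_r\frac{\ur}r\,dx$ and $T_2^b\coloneqq\int\ut\,\partial_rJ\,\partial_z\frac{\ur}r\,dx$. Each integral is further split into the near region $\{r<r_1\}$, where $|\Gamma|\le\eps$, and the far region $\{r\ge r_1\}$, where we only use the maximum principle \eqref{max_principle}. The far parts are straightforward. There $|\ut|\le M/r\le M/r_1$, so Cauchy--Schwarz bounds both far contributions by $\frac M{r_1}\|\nabla J\|_{L^2(r\ge r_1)}\|\nabla(\ur/r)\|_{L^2(r\ge r_1)}$, which is exactly the error term in the statement.

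\textbf{Near part of $T_2^a$.} We write $\ut\partial_r\frac{\ur}r=\Gamma\cdot\frac1r\partial_r\frac{\ur}r$. Lemma~\ref{lem:f} gives $\frac1r|\partial_r\frac{\ur}r|\le|\nabla^2\frac{\ur}r|$, and $|\Gamma|\le\eps$ on $\{r<r_1\}$. Hence the near part is at most
\[
\eps\|\partial_zJ\|_2\,\|\nabla^2(\ur/r)\|_2\le\eps\|\partial_zJ\|_2\|\partial_z\Omega\|_2,
\]
using Lemma~\ref{lem:f} again. No Hardy inequality is needed for this term.

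\textbf{Near part of $T_2^b$.} In the $(r,z)$ variables this term equals $2\pi\int_0^{r_1}\!\int_\R\Gamma\,\partial_rJ\,\partial_z\frac{\ur}r\,dz\,dr$. For fixed $r$, we apply Cauchy--Schwarz in $z$:
\[
\Big|\int_\R\Gamma\,\partial_rJ\,\partial_z\tfrac{\ur}r\,dz\Big|\le W(r)\,\|\partial_rJ(r,\cdot)\|_{L^2_z}\,\sup_{r'}\|\partial_z(\ur/r)(r',\cdot)\|_{L^2_z},
\qquad W(r)\coloneqq\sup_z|\Gamma(r,z)|.
\]
The last factor is handled by Lemma~\ref{lem:trace} applied to $g=\ur/r$, regarded as an axisymmetric function on $\R^3$ (we should check that $\ur/r\in H^2$ before $T_*$). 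It gives $\sup_{r'}\|\partial_z(\ur/r)\|_{L^2_z}\lesssim\|\Delta(\ur/r)\|_2\lesssim\|\nabla^2(\ur/r)\|_2\le\|\partial_z\Omega\|_2$. Next we apply Cauchy--Schwarz in $r$, using weights $r$ and $1/r$:
\[
\int_0^{r_1}W\|\partial_rJ\|_{L^2_z}\,dr\le\Big(\int_0^{r_1}W^2\frac{dr}r\Big)^{1/2}\|\partial_rJ\|_2 .
\]
It remains to prove the key estimate $\int_0^{r_1}W^2\,dr/r\lesssim\eps^2(1+\log K)$.

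\textbf{The key estimate, which is the main obstacle.} On $[\eps/\at,r_1]$ we use $W\le\eps$, and since $r_1\le\eps K/\at$ this piece contributes at most $\eps^2\log K$. On $[0,\min(r_1,\eps/\at)]$ we use $W^2\le\eps W$ (valid because $r\le r_1$) together with $W=r\sup_z|\ut|$. This gives
\[
\int_0^{\eps/\at}\frac{W^2}{r}\,dr\le\eps\int_0^{\eps/\at}\sup_z|\ut|\,dr\le\eps\cdot\frac{\eps}{\at}\cdot\at=\eps^2
\]
by Lemma~\ref{rad_lemma}. This step is the delicate point, because the naive bound $W\le\eps$ would make the integral diverge logarithmically at the axis. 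Lemma~\ref{rad_lemma}, with the supremum in $z$ inside the integral, is exactly what makes the inner core cost only $O(\eps^2)$.

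Combining the two pieces gives $\int_0^{r_1}W^2\,dr/r\lesssim\eps^2(1+\log K)$. Therefore the near part of $T_2^b$ is at most $\eps(1+\log K)^{1/2}\|\partial_rJ\|_2\|\partial_z\Omega\|_2$. Adding this to the near part of $T_2^a$ and the far parts yields the claimed bound.
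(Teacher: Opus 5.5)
Your proof is correct and follows the paper's argument essentially step by step. It uses the same split at $r_1$, the Hessian bound of Lemma~\ref{lem:f} for the $\partial_r(u^r/r)$ term, and, for the $\partial_z(u^r/r)$ term, the trace inequality plus the core/annulus splitting of $\int_0^{r_1}W^2\,dr/r$ via Lemma~\ref{rad_lemma}. The only slip is in your core display, where the upper limit should be $\rho=\min\{r_1,\varepsilon/\lambda\}$ rather than $\varepsilon/\lambda$, as you say in words; the bound $\varepsilon\rho\lambda\le\varepsilon^2$ is unaffected.
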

\begin{proof}
We decompose the integral in \eqref{eq:T2} at radius $r=r_1$. On $\{r\geq r_1\}$, we have $|\ut|\leq M/r_1$ by \eqref{max_principle}. Since the
integrand is at most $|\ut||\nabla J||\nabla(\ur/r)|$, this region contributes the last term in the claimed inequality. On $\{r<r_1\}$, we have $|\Gamma|\leq\eps$, and we treat the two terms in \eqref{eq:T2} separately.

First, for $\ut\,\partial_zJ\,\partial_r(\ur/r)$, Lemma~\ref{lem:f} gives
\begin{align*}
\Big|\ut\,\partial_r\frac{\ur}r\Big|=|\Gamma|\,\Big|\frac1r\partial_r\frac{\ur}r\Big|\leq\eps\Big|\nabla^2\frac{\ur}r\Big|,
\end{align*}
so, applying Lemma~\ref{lem:f} again, we have the upper bound
\begin{align*}
\eps\|\partial_zJ\|_2\|\nabla^2(\ur/r)\|_2\leq\eps\|\partial_zJ\|_2\|\partial_z\Omega\|_2 .
\end{align*}

Consider next the second term, $\ut\,\partial_rJ\,\partial_z(\ur/r)$. Let $W(r)\coloneqq \sup_z|\Gamma(t,r,z)|$. Since $|\ut|\,r=|\Gamma|$, the Cauchy--Schwarz inequality in $z$ and then in $r$ gives
\begin{align*}
\int_{r<r_1}|\ut||\partial_rJ||\partial_z(\ur/r)|\,dx&\leq\int_0^{r_1}W(r)\,\|\partial_rJ(r,\cdot)\|_{L^2_z}\|\partial_z(\ur/r)(r,\cdot)\|_{L^2_z}dr\\
&\leq\sup_{r>0}\|\partial_z(\ur/r)(r,\cdot)\|_{L^2_z}\Big(\int_0^{r_1}W^2\frac{dr}r\Big)^{1/2}\Big(\int_0^\infty\|\partial_rJ(r,\cdot)\|^2_{L^2_z}r\,dr\Big)^{1/2}\\
&\lesssim\|\Delta(\ur/r)\|_2\Big(\int_0^{r_1}W^2\frac{dr}r\Big)^{1/2}\|\partial_rJ\|_2,
\end{align*}
having used Lemma~\ref{lem:trace}. After another application of Lemma~\ref{lem:f} to bound $\|\Delta(u^r/r)\|_2$, it remains to show that
\begin{align*}
\int_0^{r_1}W^2\frac{dr}r\leq\eps^2(1+\log K).
\end{align*}
Let $\rho\coloneqq \min\{\eps/\at,r_1\}$. On $(0,\rho)$, we use $W\leq\eps$
and $W(r)/r=\sup_z|\ut(r,\cdot)|$ to find
\begin{align*}
\int_0^\rho W^2\frac{dr}r\leq\eps\int_0^\rho\sup_z|\ut|\,dr\leq\eps\rho\,\at\leq\eps^2 ,
\end{align*}
by Lemma~\ref{rad_lemma}. If $\rho<r_1$, then on $(\rho,r_1)$ we
have $W\leq\eps$ and $\log(r_1/\rho)=\log(r_1\at/\eps)\leq\log K$, so
\begin{align*}
\int_\rho^{r_1}W^2\frac{dr}r\leq\eps^2\log K.
\end{align*}
Combining this once again with Lemma~\ref{lem:f}, we conclude that this term is bounded by
\begin{align*}
\eps(1+\log K)^{1/2}\|\partial_rJ\|_2\|\partial_z\Omega\|_2
\end{align*}
up to a constant.
\end{proof}

\section{Main regularity proofs}\label{sec:proof}

\subsection{The energy argument}
We now carry out the energy argument of \cite{W16}, with the Hardy constant $L$ and the coefficient $\Sigma$ of the stretching term kept as
parameters.

\begin{proposition}\label{prop:energy}
For any $C_1\geq1$, there exist $c_0,C_0>0$ with $c_0$ small, with the following property. Let $u$ be as
in Theorem~\ref{thm:main}, and let $\eps\in(0,\frac12]$ and $r_0,K>0$, $L\geq\eps$, and $\Sigma\ge0$ be
such that 
\begin{align}\label{eq:win}
(\Sigma^2+\eps^2)L^2\leq c_0 .
\end{align}
For $t\in(0,T_*)$, define
\begin{align*}
r_1(t)\coloneqq \min\{\eps K/\at(t),\,r_0\},
\end{align*}
and suppose that for every $t\in(0,T_*)$, we have
\begin{align}
\int\frac{|\ut|}r|h|^2dx&\leq L\,\|\partial_rh\|_2^2+C_1\frac{L+M}{r_1^2}\int_{r\geq r_1/2}|h|^2dx\label{eq:H}
\end{align}
for $h\in\{J(t),\Omega(t)\}$, as well as
\begin{align}
|T_2|&\leq C_1\big(\Sigma\|\partial_rJ\|_2+\eps\|\partial_zJ\|_2\big)\|\partial_z\Omega\|_2+C_1\frac M{r_1}\|\nabla J\|_{L^2(r\geq r_1)}\|\nabla(\ur/r)\|_{L^2(r\geq r_1)} .\label{eq:T}
\end{align}
If $K\geq C_0\eps^{-5}X$, with $X=\max\{M_0^{1/4},r_0^{-1/2}M_1\}$ as in \S\ref{sec:main}, then $T_*=\infty$.
\end{proposition}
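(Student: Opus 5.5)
We will run an energy estimate for $A(t)=\|J\|_2^2+\mu\|\Omega\|_2^2$ and show it stays bounded on $(0,T_*)$. The weight is $\mu\coloneqq c_0^{1/2}L^{-2}$, or some comparable choice inside the window \eqref{eq:window}. Boundedness of $A$ then bounds $\|\Omega\|_2$, and the continuation criterion gives $T_*=\infty$.

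By Lemma~\ref{lem:energy},
\begin{align*}
\frac{d}{dt}A+2\|\nabla J\|_2^2+2\mu\|\nabla\Omega\|_2^2\leq 2|T_2|+\mu|T_1|.
\end{align*}

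\textbf{The term $T_1$.} We bound $|T_1|\leq 2\int\frac{|\ut|}r(\delta^{-1}J^2+\delta\Omega^2)\,dx$ with $\delta\sim\sqrt\mu$. We then apply \eqref{eq:H} to $h=J$ and $h=\Omega$. The near-axis contribution is
\[
\mu L(\delta^{-1}\|\partial_rJ\|_2^2+\delta\|\partial_r\Omega\|_2^2)\lesssim \sqrt\mu L\,(\|\nabla J\|_2^2+\mu\|\nabla\Omega\|_2^2).
\]
This is absorbed because $\sqrt\mu L\lesssim c_0^{1/4}$.

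\textbf{The term $T_2$.} We use \eqref{eq:T}. The near-axis part is at most $C_1(\Sigma+\eps)\|\nabla J\|_2\|\nabla\Omega\|_2$. It is absorbed if $(\Sigma+\eps)^2\lesssim c\mu$. Since $\mu=c_0^{1/2}L^{-2}$, this reduces to $(\Sigma^2+\eps^2)L^2\lesssim c_0^{1/2}$, which follows from \eqref{eq:win} with $c_0$ small.

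\textbf{Far-field errors.} What remains are the far-field terms
\[
\frac{L+M}{r_1^2}\sqrt\mu\,\big(\|J\|^2_{L^2(r\ge r_1/2)}+\mu\|\Omega\|^2_{L^2(r\geq r_1/2)}\big)
\quad\text{and}\quad
\frac M{r_1}\|\nabla J\|_2\|\nabla(\ur/r)\|_{L^2(r\geq r_1)}.
\]
For the second, Young's inequality gives half of $\|\nabla J\|_2^2$ plus $M^2r_1^{-2}\|\Omega\|_2^2$, using Lemma~\ref{lem:f}. On $\{r\ge r_1/2\}$ we convert the rescaled quantities back to velocity gradients: $|J|\le 2r_1^{-1}|\nabla u|$, and $\Omega$ is likewise bounded by $r_1^{-1}|\nabla u|$. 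Every far-field error is therefore bounded by
\[
C(1+M)^2 r_1^{-4}\mu^{-1}\|\nabla u\|_2^2\cdot(\text{powers of }\mu, L).
\]
Here $r_1^{-1}=\max\{\at/(\eps K),r_0^{-1}\}$. The $r_0^{-1}$ branch gives a coefficient $\lesssim r_0^{-4}\eps^{-a}(1+M)^2$, which is time-integrable because $\int_0^\infty\|\nabla u\|_2^2\,dt\le\|u_0\|_2^2$.

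\textbf{Main obstacle: the $\at$ branch.} This branch is nonlinear, since $\at^4\le\|J\|_2^2\|\nabla u\|_2^2\le A\,\|\nabla u\|_2^2$ by \eqref{lambda_def}. Terms of the form $(\eps K)^{-2}\at^2\|\Omega\|_2^2$ and $(\eps K)^{-4}\at^4\|J\|_2^2$ then give a differential inequality of roughly the form
\[
A'\leq C\,\eps^{-b}K^{-4}\,\|\nabla u\|_2^2\,A^2+C\,r_0^{-4}(\dots)\|\nabla u\|_2^2 A .
\]
To close, we set $y=A$ and integrate this Bernoulli-type inequality using $\int\|\nabla u\|_2^2\le\|u_0\|_2^2$. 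The linear part contributes a factor
\[
\exp\big(C r_0^{-4}\cdots\|u_0\|_2^2\big).
\]
This part is problematic. We would first try to make it harmless by arranging the linear coefficient to be at most $r_0^{-2}$ (not $r_0^{-4}$), using $K^{-1}$ factors so that it appears as $(r_0^{-1/2}M_1)^4K^{-4}\eps^{-b}\lesssim1$. This is exactly where $K\ge C_0\eps^{-5}X$ enters. The quadratic part stays bounded provided
\[
\eps^{-b}K^{-4}\|u_0\|_2^2 A(0)\lesssim 1 ,
\]
which by the definition of $M_0$ is ensured by $K\gtrsim\eps^{-5}M_0^{1/4}$. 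The main difficulty is tracking the powers of $\eps$ and $\mu\sim L^{-2}$ so that the exponents come out as $\eps^{-5}$ and the quantities match $M_0$ and $M_1$. If the bookkeeping is off, I would instead use a continuity/bootstrap argument: assume $A\le 2A(0)\mu^{-1}$-type bounds and show the integrated error is less than $A(0)$.
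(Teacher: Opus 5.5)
\textbf{There is a genuine gap: your far-field estimate does not close.} The near-axis part is fine. It is a harmless variant of the paper's argument. The paper takes $\mu=1/(4L^2)$ and splits $T_1$ with weights $L^{-1},L$, so that $2\|\partial_r\Omega\|_2^2$ cancels exactly against the dissipation. Your choice $\mu=c_0^{1/2}L^{-2}$, $\delta=\sqrt\mu$ instead absorbs everything by smallness, which also works.

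The problem is the far field, which is where the argument actually closes. In the $\lambda$-branch, $r_1^{-2}=(\eps K)^{-2}\lambda^2\le(\eps K)^{-2}A^{1/2}\|\nabla u\|_2$ supplies exactly one power of $\|\nabla u\|_2$. That gives two bad options:
\begin{itemize}
\item If you bound a truncated term $r_1^{-2}\int_{r\ge r_1/2}(\cdots)$ by $A$, as in your $(\eps K)^{-2}\lambda^2\|\Omega\|_2^2$, you get $A^{3/2}\|\nabla u\|_2$.
\item If you convert it to velocity gradients, you get $r_1^{-4}\|\nabla u\|_2^2\lesssim(\eps K)^{-4}A\|\nabla u\|_2^4$.
\end{itemize}
Neither is controlled by the energy inequality, which only bounds $\int_0^{T_*}\|\nabla u\|_2^2\,dt$. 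Splitting $A^{3/2}\|\nabla u\|_2$ by Young leaves a linear term with constant coefficient, so the condition on $K$ would depend on $T_*$. A bootstrap on $A$ cannot repair a wrong power of $\|\nabla u\|_2$.

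Your inequality $A'\lesssim\eps^{-b}K^{-4}\|\nabla u\|_2^2A^2$ does not follow. No term $r_1^{-4}\|J\|_2^2$ arises. Passing from $r_1^{-4}\|\nabla u\|_2^2$ to $K^{-4}A^2\|\nabla u\|_2^2$ would require $\|\nabla u\|_2^2\lesssim A$, which is not available.

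Your Young step also causes trouble. It replaces $\|\nabla(\ur/r)\|_{L^2(r\ge r_1)}$ by the global $\|\Omega\|_2$. In the $r_0$-branch this leaves $M^2r_0^{-2}\mu^{-1}A$, with no factor $\|\nabla u\|_2^2$ at all. Done correctly, the $r_0$-branch is purely additive, so no $\exp(Cr_0^{-4}\cdots)$ factor arises. Trying to extract $K^{-4}$ factors there is misplaced, since $K$ does not enter that branch.

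\textbf{The missing idea is to keep every far-field piece localized and interpolate between two bounds.} Set $\mathcal R=r_1^{-2}\int_{r\ge r_1/2}(|\nabla(\ur/r)|^2+\Omega^2+J^2)\,dx$.
\begin{itemize}
\item Lemma~\ref{lem:f} gives $\mathcal R\lesssim r_1^{-2}\eps^{-2}A$.
\item On $\{r\ge r_1/2\}$ one has $|\nabla(\ur/r)|\lesssim r_1^{-1}(|\nabla\ur|+|\ur|/r)$ and $|\Omega|+|J|\lesssim r_1^{-1}|\nabla u|$, so $\mathcal R\lesssim r_1^{-4}\|\nabla u\|_2^2$.
\end{itemize}
Now use $\min\{x,y\}\le x^{2/3}y^{1/3}$. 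The exponent $2/3$ is exactly the one for which the powers of $\|\nabla u\|_2$ add up to two. In the $\lambda$-branch this gives $\mathcal R\lesssim\eps^{-4/3}(\eps K)^{-8/3}A^{4/3}\|\nabla u\|_2^2$. In the $r_0$-branch, $\mathcal R\lesssim r_0^{-4}\|\nabla u\|_2^2$, with no factor of $A$. Hence
\[
A'\lesssim(1+M)^2\eps^{-13/3}\|\nabla u\|_2^2\max\{A^{4/3}(\eps K)^{-8/3},r_0^{-4}\}.
\]
This closes by a continuity argument on $\{A\le 2A_*\}$. Here $A_*=\max\{A(0),(\eps K)^2r_0^{-3}\}$, the second entry being the crossover point of the two branches, and one uses $\int_0^{T_*}\|\nabla u\|_2^2\,dt\lesssim\|u_0\|_2^2$. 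The argument produces two conditions: $K\gtrsim\eps^{-23/8}M_0^{1/4}$, using $A(0)^{1/2}\le\eps^{-1}(\|J_0\|_2+\|\Omega_0\|_2)$, and $K\gtrsim\eps^{-19/6}r_0^{-1/2}M_1$. Both are implied by $K\ge C_0\eps^{-5}X$.
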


\begin{proof}
Consider first an energy inequality for $J$. Using \eqref{eq:EJ}, \eqref{eq:T}, and Young's inequality, and dropping the axis term, we obtain
\begin{align}\label{eq:s1}
\frac{d}{dt}\|J\|_2^2+\|\nabla J\|_2^2\lesssim (\Sigma^2+\eps^2)\,\|\partial_z\Omega\|_2^2+\frac{M^2}{r_1^2}\|\nabla(\ur/r)\|^2_{L^2(r\geq r_1)} 
\end{align}
where, here and elsewhere, the implicit constants may depend on $C_1$.

Next, we turn to an energy estimate for $\Omega$. By \eqref{eq:EO}, Cauchy--Schwarz, and
Young's inequality, we have
\begin{align*}
4\Big|\int\frac{\ut}rJ\Omega\,dx\Big|\leq\frac2L\int\frac{|\ut|}r\Omega^2dx+2L\int\frac{|\ut|}rJ^2dx .
\end{align*}
Applying \eqref{eq:H} with $h=\Omega$ and $h=J$, and dropping the axis term, we
find
\begin{align*}
\frac{d}{dt}\|\Omega\|_2^2+2\|\nabla\Omega\|_2^2\leq 2\|\partial_r\Omega\|_2^2+2L^2\|\partial_rJ\|_2^2+2C_1\frac{L+M}{r_1^2}\int_{\{r\geq r_1/2\}}\Big(L^{-1}|\Omega|^2+L|J|^2\Big)dx.
\end{align*}
Exploiting the exact cancellation of the $2\|\partial_r\Omega\|_2^2$ on either side, this implies
\begin{align}\label{eq:s2}
    \frac{d}{dt}\|\Omega\|_2^2+2\|\partial_z\Omega\|_2^2\leq 2L^2\|\partial_rJ\|_2^2+O\Big(\frac{L+M}{r_1^2}\Big)\int_{\{r\geq r_1/2\}}\Big(L^{-1}|\Omega|^2+L|J|^2\Big)dx.
\end{align}
With $\mu\coloneqq \frac1{4L^2}$, we now set
\begin{align*}
    A\coloneqq \|J\|_2^2+\mu\|\Omega\|_2^2
\end{align*}
and add \eqref{eq:s1} to \eqref{eq:s2}, the latter weighted by $\mu$. Using $2\mu L^2=\frac12$ and splitting $\|\nabla J\|_2^2$ into its components, we arrive at
\begin{align*}
\frac{d}{dt}A+\frac12\|\partial_rJ\|_2^2+\|\partial_zJ\|_2^2+\big(2\mu-O(\Sigma^2+\eps^2)\big)\|\partial_z\Omega\|_2^2\leq\mathcal F,
\end{align*}
where $\mathcal F$ collects the far-field terms with integration over $\{r\geq r_1/2\}$. The coefficient of $\|\partial_z\Omega\|_2^2$ is clearly non-negative by \eqref{eq:win} and the choice of $c_0$. We estimate
\begin{align*}
    \mathcal F&\lesssim \frac{M^2}{r_1^2}\|\nabla(\ur/r)\|^2_{L^2(r\geq r_1)} +\mu\frac{L+M}{r_1^2}\int_{\{r\geq r_1/2\}}\Big(L^{-1}|\Omega|^2+L|J|^2\Big)dx\\
    &\lesssim (M^2+\mu(1+\frac ML+L^2))\mathcal R
\end{align*}
where $\mathcal R$ is the truncated energy
\begin{align*}
\mathcal R(t)\coloneqq \frac1{r_1^2}\int_{r\geq r_1/2}\Big(\Big|\nabla\frac\ur r\Big|^2+\Omega^2+J^2\Big)dx .
\end{align*}
By \eqref{eq:win}, we have $\eps L\leq c_0^{1/2}\leq\frac14$, so
\begin{align}\label{eq:Lrange}
\eps\leq L\leq\frac1{4\eps}.
\end{align}
Thus,
\begin{align*}
\mathcal F&\lesssim (1+M)^2\eps^{-3}\mathcal R
\end{align*}
which implies
\begin{align}\label{eq:s3}
\frac{d}{dt}A\lesssim(1+M)^2\eps^{-3}\,\mathcal R(t).
\end{align}

It remains to control the far-field terms. By Lemma~\ref{lem:f} and \eqref{eq:Lrange},
\begin{align*}
\int(|\nabla(\ur/r)|^2+\Omega^2+J^2)dx\lesssim\|\Omega\|_2^2+\|J\|_2^2\lesssim(1+L^2)A\lesssim\eps^{-2}A .
\end{align*}
On $\{r\geq r_1/2\}$, the identity $\nabla(\ur/r)=\nabla\ur/r-(\ur/r^2)e_r$ gives
\begin{align*}
|\nabla(\ur/r)|^2\lesssim r_1^{-2}\big(|\nabla\ur|^2+(\ur/r)^2\big),\qquad\Omega^2+J^2\leq r^{-2}|\omega|^2\lesssim r_1^{-2}|\nabla u|^2 .
\end{align*}
Combining these bounds with \eqref{pointwise_bound}, we have
\begin{align*}
\mathcal R(t)\lesssim\frac1{r_1^2}\min\Big\{\eps^{-2}A,\ \frac1{r_1^2}\|\nabla u\|_2^2\Big\}.
\end{align*}
Consider first the case $r_1=\eps K/\at\leq r_0$. Since
$\at^2=\|J\|_2\|\ut/r\|_2\leq A^{1/2}\|\nabla u\|_2$, we have
\begin{align*}
r_1^{-2}=\at^2(\eps K)^{-2}\leq A^{1/2}\|\nabla u\|_2(\eps K)^{-2} .
\end{align*}
Then, using $\min\{x,y\}\leq x^{2/3}y^{1/3}$, we obtain
\begin{align*}
\mathcal R&\lesssim\frac{A^{1/2}\|\nabla u\|_2}{(\eps K)^2}\min\Big\{\eps^{-2}A,\frac{A^{1/2}\|\nabla u\|_2^3}{(\eps K)^2}\Big\}
\\
&=\frac{A\|\nabla u\|_2}{(\eps K)^2}\min\Big\{\eps^{-2}A^{1/2},\frac{\|\nabla u\|_2^3}{(\eps K)^2}\Big\}\\
&\leq\frac{A\|\nabla u\|_2}{(\eps K)^2}\big(\eps^{-2}A^{1/2}\big)^{2/3}\Big(\frac{\|\nabla u\|_2^3}{(\eps K)^2}\Big)^{1/3}
\\
&=\eps^{-4/3}\frac{A^{4/3}\|\nabla u\|_2^2}{(\eps K)^{8/3}},
\end{align*}
using that the upper bound for $\mathcal R(t)$ is increasing in $r_1^{-2}$. In the remaining case $r_1=r_0$, we simply have $\mathcal R\lesssim r_0^{-4}\|\nabla u\|_2^2$. Inserting these bounds into
\eqref{eq:s3}, we find that
\begin{align}\label{eq:ode}
\frac{d}{dt}A\leq C_3(1+M)^2\eps^{-13/3}\|\nabla u\|_2^2\,G(A),\qquad G(y)\coloneqq \max\Big\{\frac{y^{4/3}}{(\eps K)^{8/3}},\frac1{r_0^4}\Big\},
\end{align}
with a constant $C_3$ depending only on $C_1$.

We now conclude by a standard continuity argument. Let
\begin{align*}
y_*\coloneqq (\eps K)^2r_0^{-3},\qquad A_*\coloneqq \max\{A(0),y_*\}.
\end{align*}
Then $y_*$ is the point where the two expressions in $G$ agree, so that
$G(y)=y^{4/3}(\eps K)^{-8/3}$ for $y\geq y_*$. Let $I$ be the set of $t\in[0,T_*)$ with $A\le2A_*$ on $[0,t]$.
Clearly $I$ is a closed interval containing $0$, since $A$ is continuous. For $t\in I$, integrating \eqref{eq:ode} over $(s,t)$, letting
$s\to0$, and using the energy equality and the fact that $G$ is non-decreasing, we obtain
\begin{align*}
A(t)&\leq A(0)+C_3(1+M)^2\eps^{-13/3}G(2A_*)\int_0^t\|\nabla u\|_2^2ds\\&\leq A_*+\frac{C_3}2\eps^{-13/3}M_1^2\,(2A_*)^{4/3}(\eps K)^{-8/3}.
\end{align*}
Since $A_*^{1/3}\leq A(0)^{1/3}+y_*^{1/3}$ and $y_*^{1/3}=(\eps K)^{2/3}r_0^{-1}$, the last term is less than $A_*$ as soon as
\begin{align*}
(\eps K)^{8/3}\geq C\,\eps^{-13/3}M_1^2A(0)^{1/3}\quad\text{and}\quad(\eps K)^2\geq C\,\eps^{-13/3}M_1^2r_0^{-1}
\end{align*}
for a suitable constant $C$ depending only on $C_1$. Since $\mu^{1/2}=\frac1{2L}\leq\frac1{2\eps}$,
\begin{align*}
A(0)^{1/2}\leq\|J_0\|_2+\mu^{1/2}\|\Omega_0\|_2\leq\eps^{-1}(\|J_0\|_2+\|\Omega_0\|_2) ,
\end{align*}
and therefore $M_1^2A(0)^{1/3}\leq\eps^{-2/3}M_0^{2/3}$. Hence both conditions hold provided
\begin{align*}
K\geq C'\eps^{-23/8}M_0^{1/4}\quad\text{and}\quad K\geq C'\eps^{-19/6}r_0^{-1/2}M_1
\end{align*}
for a constant $C'$ depending only on $C_1$. Setting $C_0= C'+1$, both conditions follow from the hypothesis $K\geq C_0\eps^{-5}X$. Thus $A(t)<2A_*$ for $t\in I$, so $I$ is also open
in $[0,T_*)$. We conclude that $I=[0,T_*)$, i.e., $A\le2A_*$ on $[0,T_*)$.

Finally, since $\|\Omega(t)\|_2^2\leq A(t)/\mu$ is bounded on $(0,T_*)$, we conclude that $T_*=\infty$ by a standard regularity criterion, see, for instance, \cite[Lemma~3.1]{CFZ17} or \cite{W16}.
\end{proof}

\subsection{Proof of theorems}\label{sec:proofs}

The main theorems are now a routine application of the energy inequalities we have established.

\begin{proof}[Proof of Theorem~\ref{thm:quant}]
Let $C_1$ be large compared to the implicit constants in Lemmas~\ref{lem:hardy} and~\ref{lem:T2}, and let $c_0'$ and $C_0$ be the corresponding constants of Proposition~\ref{prop:energy}. We set $c_0\coloneqq c_0'/2$ and define
\begin{align*}
K\coloneqq e^S,\qquad L\coloneqq \eps(1+S)^2,\qquad\Sigma\coloneqq \eps(1+S)^{1/2}.
\end{align*}
Then $L\geq\eps$ and, by \eqref{ass1},
\begin{align*}
(\Sigma^2+\eps^2)L^2\le2\eps^4(1+S)^5\leq 2c_0=c_0',
\end{align*}
which gives the hypothesis \eqref{eq:win}; moreover, $\eps\leq c_0^{1/4}\leq\frac12$. Let
$r_1(t)\coloneqq \min\{\eps K/\at(t),r_0\}$. Since $r_1\leq\eps K/\at$ and $|\Gamma(t)|\leq\eps$ on $\{r\leq r_1\}\subset\{r\leq r_0\}$,
Lemmas~\ref{lem:hardy} and~\ref{lem:T2} apply at every $t\in(0,T_*)$ and give \eqref{eq:H} and \eqref{eq:T}. Finally, \eqref{ass2} is
precisely $K\geq C_0\eps^{-5}X$, so Proposition~\ref{prop:energy} gives $T_*=\infty$.
\end{proof}

\begin{proof}[Proof of Theorem~\ref{thm:main}]
Let $c_0$ and $C_0$ be the constants of Theorem~\ref{thm:quant}, and set $c_*\coloneqq c_0^{1/4}$. For $r_0\in(0,\delta_0]$, let
\begin{align*}
\eps(r_0)\coloneqq c_*(\log r_0^{-1})^{-5/4},
\end{align*}
so that
\begin{align*}
S_*(\eps(r_0))=c_0^{1/5}\eps(r_0)^{-4/5}-1=\log r_0^{-1}-1.
\end{align*}
Since $\eps(r_0)\to0$ as $r_0\to0$, we may take $r_0$ small enough
that $\eps(r_0)\leq c_0^{1/4}$. Since the function $r\mapsto(\log r^{-1})^{-5/4}$ is increasing on $(0,1)$, the hypothesis gives $|\Gamma|\leq\eps(r_0)$ on $\{r\leq r_0\}\times(0,T_*)$. With $S\coloneqq S_*(\eps(r_0))$, \eqref{ass1} holds by definition, while \eqref{ass2} requires
\begin{align*}
S_*(\eps(r_0))&\geq\log C_0+5\log\frac1{\eps(r_0)}+\max\Big\{\frac14\log M_0,\ \frac12\log r_0^{-1}+\log M_1\Big\}\\
&=\frac12\log r_0^{-1}+O(\log\log r_0^{-1}),
\end{align*}
because $\log\frac1{\eps(r_0)}=\frac54\log\log r_0^{-1}+O(1)$. Since $S_*(\eps(r_0))=\log r_0^{-1}-1$, this holds for all sufficiently small $r_0$, and thus Theorem~\ref{thm:quant} applies.
\end{proof}

\bibliographystyle{abbrv}
\bibliography{refs}

\end{document}